\newcommand\cyr
\renewcommand\rmdefault{wncyr}
\renewcommand\sfdefault{wncyss}
\renewcommand\encodingdefault{OT2}
\DeclareTextFontCommand{\textcyr}{\cyr}
  \theoremstyle{plain}
  \newtheorem{theorem}{Theorem}[section]
 \newtheorem{lemma}[theorem]{Lemma}
 \newtheorem{example}[theorem]{Example}  
 \newtheorem{corollary}[theorem]{Corollary}
\newtheorem{proposition}[theorem]{Proposition}
\crefname{lemma}{Lemma}{Lemma}
  \crefname{corollary}{Corollary}{Corollary}
  \crefname{theorem}{Theorem}{Theorem}
  \crefname{definition}{Definition}{Definition}
   \crefname{proposition}{Proposition}{Proposition}
 \crefname{section}{Section}{Section} 
   \crefname{construction}{Construction}{Construction}
   \crefname{generalization}{Generalization}{Generalization}
  \crefname{construction}{Construction}{Construction}
  \crefname{notation}{Notation}{Notation}
   \crefname{example}{Example}{Example}
  \crefname{remark}{Remark}{Remark}
  \crefname{fact}{Fact}{Fact}
  \crefname{conjecture}{Conjecture}{Conjecture}
  \crefname{motivation}{Motivation}{Motivation}  
  \crefname{figure}{Figure}{Figure}  
  \newtheorem{definition}[theorem]{Definition}
  \newtheorem{remark}[theorem]{Remark}
  \numberwithin{equation}{section}
 \numberwithin{figure}{section}
  \numberwithin{figure}{subsection}
  \newcommand{\cA}{{\mathcal A}}
  \renewcommand{\cD}{{\mathcal D}}
  \newcommand{\cK}{{\mathcal K }}
  \newcommand{\cB}{{\mathcal B }}
       \newcommand{\cU}{{\mathcal U }}
  \newcommand{\Hom}{\text{Hom}}
  \newcommand{\Kom}{\text{Kom}}
  \newcommand{\id}{\text{id}}
  \newcommand{\Ba}{\mathscr{B}}
  \newcommand{\Aa}{\mathscr{A}}
   \newcommand{\ba}{\begin{eqnarray}}
   \newcommand{\na}{\end{eqnarray}}
   \newcommand{\ban}{\begin{eqnarray*}}
   \newcommand{\nan}{\end{eqnarray*}}
  \newcommand{\B}{\mathbb B}
  \newcommand{\C}{{\mathbb C}}
  \newcommand{\R}{{\mathbb R}}
  \newcommand{\Z}{{\mathbb Z}}
       \newcommand{\bS}{\mathbb S}
  \newcommand{\sB}{\mathscr{B}}
\newcommand{\ra}{\rightarrow}
\newcommand{\xra}{\xrightarrow}
\newcommand\rb[1]{\textcolor{red}{\textbf{#1}}}
\newcommand{\ol}{\overline}
     \def\uv{\underline{v}}
       \def\uw{\underline{w}}
  \newcommand{\<}{\langle}
  \renewcommand{\>}{\rangle}
\newcommand\reallywidehat[1]{%
\savestack{\tmpbox}{\stretchto{%
  \scaleto{%
    \scalerel*[\widthof{\ensuremath{#1}}]{\kern-.6pt\bigwedge\kern-.6pt}%
    {\rule[-\textheight/2]{1ex}{\textheight}}
  }{\textheight}%
}{0.5ex}}%
\stackon[1pt]{#1}{\tmpbox}%
}
\newtheorem*{theorem*}{Theorem}
\newtheoremstyle{named}{}{}{\itshape}{}{\bfseries}{.}{.5em}{\thmnote{#3's }#1}
\theoremstyle{named}
\newtheoremstyle{name}{}{}{\itshape}{}{\bfseries}{.}{.5em}{\thmnote{#3}#1}
\theoremstyle{name}
\newcommand\xrightleftarrows[2][]{\ext@arrow 0099{\longrightleftarrowsfill@}{#1}{#2}}
\def\longrightleftarrowsfill@{\arrowfill@\leftarrow\relbar\rightarrow}
\title{
Faithfulness of the 2-braid group via zigzag algebra in type B}
\author{Edmund Heng and Kie Seng Nge}
\address{Institut des Hautes Études Scientifiques, 35, Route de Chartres, 91440 Bures-sur-Yvette, France}
\email{heng@ihes.fr}
\address{School of Mathematics and Physics, Department of Mathematics and Applied Mathematics, Xiamen University Malaysia, Block A4, Jalan Sunsuria, Bandar Sunsuria, 43900 Sepang, Selangor Darul Ehsan, Malaysia.}
\email{kieseng.nge@xmu.edu.my}
\keywords{2-braid groups, Soergel bimodules, Temperley-Lieb algebras, type B braid groups}
\subjclass{20F36, 18N25, 20J05}
\begin{document}

\begin{abstract}
We show that a certain category of bimodules over a finite dimensional quiver algebra known as type $B$ zigzag algebra is a quotient category of the category of type $B$ Soergel bimodules.
This leads to an alternate proof of Rouquier's conjecture on the faithfulness of the 2-braid groups for type $B$.
\end{abstract}

\maketitle

\section{Introduction}
In \cite{rouquier_2006}, Rouquier introduced the \emph{2-braid groups} associated to the Artin braid groups using Rouquier complexes, which are certain complexes built from Soergel bimodules.
As the category of Soergel bimodules is a categorification of the Hecke algebra \cite{Soergel92, Soergel_07}, the 2-braid group can be viewed as a categorification of the Hecke algebra representation of the corresponding Artin braid group.
Within the same paper \cite{rouquier_2006}, Rouquier stated the faithfulness of the 2-braid group as a conjecture, where in type $A$ faithfulness is known due to the results in \cite{KhoSei}.
This was extended to the ADE types in \cite{brav_thomas_2010} and later on to all other finite types in \cite{jensen_2016}.

In this paper we shall provide an alternate proof to faithfulness of the 2-braid groups in the type $B(=C)$ case.
In contrast to the proofs in \cite{brav_thomas_2010} and \cite{jensen_2016}, our proof is closer in spirit to the proof of the type $A$ case in \cite{rouquier_2006}, which we now briefly recall.
In \cite{KhoSei}, the authors provide a nil-categorification of the type $A$ Temperley-Lieb algebra, using certain monoidal, additive subcategory of $\Aa$-bimodules for some quiver algebra $\Aa$ known as the type $A$ zigzag algebra.
Moreover, certain complexes over these $\Aa$-bimodules (almost identical to Rouquier complexes) collectively define a braid group action on the homotopy category of projective modules over $\Aa$.
This action categorifies the Burau representation of the type $A$ braid group and is shown to be faithful.
Just as the Temperley-Lieb algebra is a quotient of the Hecke algebra, this subcategory of $\Aa$-bimodules can be viewed as a quotient category of the category of Soergel bimodules; namely, one can construct an essentially surjective functor from the category of Soergel bimodules to this subcategory of $\Aa$-bimodules.
As such, the faithfulness of the type $A$ 2-braid group\footnote{This is not to be confused with 2-faithfulness of a braid group involving cobordisms, see \cite[Section 3.3]{KT07}.} follows from the faithfulness of the categorified Burau representation.

In our previous work \cite{heng_nge}, a type $B$ analogue of \cite{KhoSei} was developed.
In particular, we constructed a quiver algebra  $\Ba$ that we called type $B$ zigzag algebra\footnote{There is a different zigzag algebra that can be thought of as type $C$; see \cref{rmk:typeCzigzag}.}, such that the type $B$ Artin braid group acts faithfully on the homotopy category of projective modules over $\Ba$ using certain complexes of $\Ba$-bimodules (which again, is almost identical to the Rouquier complexes).
As such, it is only natural to show that this whole story for type $B$ also fits into the world of 2-braid groups and Soergel-bimodules; such is the goal of this paper.

To be more precise, we shall describe a monoidal, additive subcategory of $\Ba_n$-bimodules and construct an essentially surjective functor from the category of type $B_n$ Soergel bimodules.
As in the case for type $A$, this will produce an alternate proof of the faithfulness of the type $B$ 2-braid groups.
This result also partly serves as evidence that the zigzag algebra we introduced in \cite{heng_nge} is indeed a reasonable definition of a type $B$ zigzag algebra, at least from the point of view of braid group actions.

\subsection*{Outline of the paper}
We shall follow closely the work of Jensen in \cite{jensen_master}, which spells out the details for type $A$. 
In particular, all of our results will be formulated using the equivalent diagrammatic category of Soergel bimodules given in \cite{EW_2016}.

\cref{define zigzag} and \cref{Soergel} are brief summaries of type $B$ zigzag algebras $\Ba$ and 2-braid groups respectively, where in \cref{define zigzag} we also define the relevant category of $(\Ba, \Ba)$-bimodules $\cB$ (\cref{defn: category of type B bimodules}) that categorifies a quotient of the type $B$ Temperley-Lieb algebra (in the sense of \cite{Green}, see \cref{Functorial TL}).
The main results of this paper and their proofs are all contained in \cref{quotient and proof}, where we construct the quotient functor from the category of Soergel bimodules to the category of $(\Ba,\Ba)$-bimodules $\cB$.
The faithfulness of the 2-braid group will then be a simple consequence.

\subsection*{Acknowledgements}
We would like to thank our supervisor Tony Licata for his guidance throughout.
This paper results from a discussion with Thorge Jensen during the conference ``New Connections in Representation Theory 2020, Mooloolaba'', of which we are deeply grateful for.
We thank the organisers for making this possible and we thank Thorge for his patience in answering our questions.
We would also like to thank the referee for their suggestion and for carefully reading our paper.

\section{Type B Zigzag Algebras}\label{define zigzag}

In this section we recall the relevant objects and results from \cite{heng_nge}: the construction of the type $B_n$ zigzag algebra $\Ba_n$ and the fact that the type $B_n$ Artin group $\mathcal{A}(B_n)$ acts faithfully on the bounded homotopy category $\Kom^b(\Ba_n$-$\text{p$_{r}$g$_{r}$mod})$ of projective, $\Z$-graded modules over $\Ba_n$.
We shall also describe the additive and monoidal category of $\Ba_n$-bimodules that categorifies a quotient of the type $B$ Temperley-Lieb algebra (in the sense of \cite{Green}).

\subsection{Type \texorpdfstring{$B_n$}{B} Artin group}
For $n \geq 2,$ the type $B_n$ Artin group $\cA({B_n})$ is the group generated by $n$ generators:
$$ \sigma_1^B,\sigma_2^B,\ldots, \sigma_{n}^B $$
\noindent subject to the relations
\begin{alignat}{2}
\sigma_1^B \sigma_2^B \sigma_1^B \sigma_2^B &= \sigma_2^B \sigma_1^B \sigma_2^B \sigma_1^B;   \label{length 4 relation} \\
      \sigma_j^B  \sigma_k^B  &= \sigma_k^B  \sigma_j^B,    &&\text{for }  |j-k|> 1; \label{commuting relation} \\    
      \sigma_j^B  \sigma_{j+1}^B  \sigma_j^B  &=  \sigma_{j+1}^B  \sigma_{j}^B  \sigma_{j+1}^B,  \quad &&\text{for }  j= 2,3, \ldots, n-1. \label{length 3 relation}
\end{alignat}
Its corresponding Coxeter group $W(B_n)$ is generated by $s_1,s_2,...,s_n$ subject to the relations \eqref{length 4 relation}, \eqref{commuting relation}, \eqref{length 3 relation} (with $s$ in place of $\sigma^B$) and $s_j^2 = 1$ for all $j \geq 1$.

\subsection{Type \texorpdfstring{$B_n$}{B} zigzag algebra \texorpdfstring{$\Ba_n$}{}} \label{B zigzag}
Consider the following quiver $Q_n$:
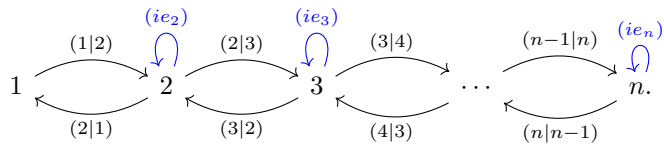
\begin{figure}[H]
\begin{tikzcd}[column sep = 1.5cm]
1				\arrow[r,bend left,"(1|2)"]  														 &		 
2 			\arrow[l,bend left,"(2|1)"] \arrow[r,bend left, "(2|3)"] 
				\arrow[color=blue,out=70,in=110,loop,swap,"(ie_2)"] &
3 			\arrow[l,bend left,"(3|2)"] \arrow[r,bend left, "(3|4)"] 
				\arrow[color=blue,out=70,in=110,loop,swap,"(ie_3)"] &
\cdots \arrow[l,bend left,"(4|3)"] \arrow[r,bend left, "(n-1|n)"] &
n 		.	\arrow[l,bend left,"(n|n-1)"]
				\arrow[color=blue,out=70,in=110,loop,swap,"(ie_n)"]
\end{tikzcd}
\caption{{\small The quiver $Q_n$. Note that paths are read from left to right. The loops $(ie_j)$ are exceptionally length 0.}}
\label{B quiver}
\end{figure}
Take its path algebra $\R Q_n$ over $\R$ and consider the path length grading on $\R Q_n$, where exceptionally the ``imaginary'' path $(ie_j)$ has grading $0.$ 
Note that $(ie_j)$ is the loop in \cref{B quiver} and is not to be confused with the constant path $e_j$ (also length 0).
In this paper we use the notation $(1)$ to denote a grading shift \emph{down} by $1.$

We are now ready to define the zigzag algebra of type $B_n$:
\begin{definition}
The zigzag path algebra of $B_n$, denoted by $\Ba_n$, is the quotient algebra of the path algebra $\R Q_n$ modulo the usual zigzag relations given by
\begin{align}
(j|j-1)(j-1|j) &= (j|j+1)(j+1|j) \qquad (=: X_j);\\
(j-1|j)(j|j+1) = & 0 = (j+1|j)(j|j-1);
\end{align}
for $2\leq j \leq n-1$, in addition to the relations
\begin{align}
(ie_j)(ie_j) &= -e_j, \qquad \text{for } j \geq 2 \label{imaginary};\\
(ie_{j-1})(j-1|j) &= (j-1|j)(ie_j), \qquad \text{for } j\geq 3; \label{complex symmetry 1}\\
(ie_{j})(j|j-1) &= (j|j-1)(ie_{j-1}), \qquad \text{for } j\geq 3; \label{complex symmetry 2}\\
(1|2)(ie_2)(2|1) &= 0, \\
(ie_2) X_2 &= X_2 (ie_2).
\end{align}
\end{definition}
\begin{remark} \label{rmk:typeCzigzag}
    One can define a ``type C'' zigzag algebra by switching the position of the (length zero) loops $(ie_j)$, where we only have $(ie_1)$ on vertex 1 and nowhere else. The relations are defined similarly. 
\end{remark}
Since the relations are all homogeneous with respect to the path length grading, $\Ba_n$ is a $\Z$-graded algebra.
As a $\R$-vector space, $\Ba_n$ has dimension $8n-6$, with the following basis:
\begin{small}
\begin{align*}
\{ 
    &e_1 , \ldots, e_{n}, ie_2 , \ldots, ie_{n},  \\
    & (1|2), \ldots, (n-1 | n), (2|1), \ldots, (n | n-1), (ie_2)(2|1), (1|2)(ie_2), (ie_2)(2|3), \ldots, (ie_{n-1})(n-1|n), \\
    & (3|2)(ie_2), \ldots, (n|n-1)(ie_{n-1}), (1|2|1), \ldots, (2n-1|2n-2|2n-1), (ie_2)(2|1|2), \ldots, (ie_n)(n|n-1|n) 
    \}.
\end{align*}
\end{small}

The indecomposable (left) projective $\Ba_n$-modules are given by $P^B_j := \Ba_n e_j$.
For $j=1$, $P^B_j$ is naturally a $(\Ba_n, \R)$-bimodule; there is a natural left $\Ba_n$-action given by multiplication of the algebra and the right $\R$-action induced by the natural $\R$-vector space structure.
Nonetheless, for $j\geq 2$, we shall endow $P^B_j$ with a right $\C$-action.
To this end, note that \eqref{imaginary} is analogous to the relation satisfied by the complex imaginary number $i$.
We define a right $\C$-action on $P^B_j$ by $p * (a+ib) = ap + bp(ie_j)$ for $p \in P^B_j, a+ib\in \C$.
Further note that this right action restricted to $\R$ agrees with both the natural right and left $\R$-action.
This makes $P^B_j$ into a $(\Ba_n,\C)$-bimodule for $j\geq 2$.
Dually, we shall define ${}_jP^B := e_j\Ba_n$, where we similarly consider it as a ($\R,\Ba_n)$-bimodule for $j=1$ and as a $(\C,\Ba_n)$-bimodule for $j\geq 2$.

It is easy to check that we have the following isomorphisms of $\Z$-graded bimodules:
\begin{proposition}[{\cite[Proposition 3.5]{heng_nge}}]\label{iPj identification}
Denote ${}_jP^B_k := {}_jP^B\otimes_{\Ba_n}P^B_k$. We have that
\[
  {}_jP^B\otimes_{\Ba_n}P^B_k \cong 
  \begin{cases}
  		\ _{\C}\C_{\C}(-1), & \text{as graded } (\C,\C)\text{-bimodules, for } j,k \in \{2,\hdots, n\}, |j-k|=1;\\
  		\ _{\C}\C_{\C} \oplus {}_{\C}\C_{\C}(-2), & \text{as graded } (\C,\C)\text{-bimodules, for } j=k=2,3,\hdots,n; \\
        \ _{\R}\C_{\C}(-1), & \text{as graded } (\R,\C)\text{-bimodules, for } j=1 \text{ and } k=2; \\
        \ _{\C}\C_{\R}(-1), & \text{as graded } (\C,\R)\text{-bimodules, for } j=2 \text{ and } k=1; \\ 
        \ _{\R}\R_{\R} \oplus {}_{\R}\R_{\R}(-2), & \text{as graded } (\R,\R)\text{-bimodules, for } j=k=1; \\    
        \ 0, &\text{otherwise}.
  \end{cases}
\]
\label{bimodule isomorphism}
\end{proposition}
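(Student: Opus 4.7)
The starting point is the standard identification
$$
{}_jP^B \otimes_{\Ba_n} P^B_k \;=\; e_j\Ba_n \otimes_{\Ba_n} \Ba_n e_k \;\cong\; e_j\Ba_n e_k,
$$
so the proposition reduces to computing the graded vector space $e_j\Ba_n e_k$ together with its natural left and right actions. The plan is to read off a basis of $e_j\Ba_n e_k$ from the $(8n-6)$-dimensional basis of $\Ba_n$ recalled in the preceding subsection by selecting exactly those basis elements that begin at vertex $j$ and end at vertex $k$, and then to verify that the induced actions of $e_j\Ba_n e_j$ and $e_k\Ba_n e_k$ factor through the appropriate copy of $\R$ or $\C$ so as to produce the claimed bimodule structure.

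The argument proceeds by a case analysis organised by the geometry of the quiver $Q_n$. When $|j-k|>1$, no basis path runs from $j$ to $k$ and the result is zero. When $|j-k|=1$ with both $j,k\geq 2$, the only surviving basis elements are $(j|k)$ and $(ie_j)(j|k)$, and by the complex symmetry relations \eqref{complex symmetry 1} and \eqref{complex symmetry 2} the latter coincides with $(j|k)(ie_k)$; this gives a free rank-one $(\C,\C)$-bimodule concentrated in degree $1$, identifying it with $\C(-1)$. When $j=k\geq 2$, the surviving basis elements are $e_j$, $(ie_j)$, $X_j$ and $(ie_j)X_j$, with the equality $(ie_j)X_j = X_j(ie_j)$ established by propagating the imposed relation $(ie_2)X_2 = X_2(ie_2)$ up the quiver using \eqref{complex symmetry 1} and \eqref{complex symmetry 2}; this is free of rank one over $\C$ in degrees $0$ and $2$, matching $\C \oplus \C(-2)$. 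The three cases involving vertex $1$ are handled using the killing relation $(1|2)(ie_2)(2|1)=0$, which together with $(1|2)(2|3)=0$ forces $(1|2|1)$ to be the only nonzero length-two path at $1$ and rules out any length-three paths between vertices $1$ and $2$ threading through the loop at vertex $2$.

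The main obstacle will be the careful bookkeeping of which compositions of arrows and loops survive after imposing all the zigzag relations, especially the asymmetry introduced by the absence of a loop at vertex $1$. Once every higher-length path is shown to vanish and each $(ie_j)$ is shown to act as a square root of $-1$ that commutes with the remaining surviving paths, the stated bimodule isomorphisms are forced by matching the degrees and dimensions of the surviving basis elements; the left and right $\C$-actions on $P^B_j$ and ${}_jP^B$ defined above then automatically induce the $\R$- and $\C$-bimodule structures appearing in the statement.
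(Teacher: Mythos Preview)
Your proposal is correct. The paper does not give its own argument here: the proposition is quoted from \cite{heng_nge} and is introduced only with the sentence ``It is easy to check that we have the following isomorphisms of $\Z$-graded bimodules,'' so there is nothing to compare against beyond the implicit suggestion that one should do exactly what you did---identify ${}_jP^B\otimes_{\Ba_n}P^B_k$ with $e_j\Ba_n e_k$, read off the surviving basis paths from the explicit $(8n-6)$-element basis, and check that the loops $(ie_j)$ induce the claimed $\C$-actions via the relations \eqref{imaginary}, \eqref{complex symmetry 1}, \eqref{complex symmetry 2} and $(ie_2)X_2=X_2(ie_2)$. Your case analysis and your handling of the asymmetry at vertex~$1$ (using $(1|2)(ie_2)(2|1)=0$ and the zigzag relation $(1|2)(2|3)=0$) are exactly the verifications required.
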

\begin{remark}
Note that all the graded bimodules in \cref{bimodule isomorphism} can be restricted to a $(\R, \R)$-bimodule by identifying ${}_\R \C_\R \cong \R\oplus \R$ as $(\R, \R)$-bimodules.
For example, ${}_1 P_2^B$ as a $(\R, \R)$-bimodule is generated by $(1|2)$ and $(1|2)i$, so it is isomorphic to $\R(-1) \oplus \R(-1) \cong {}_\R \C_\R (-1)$.
\end{remark}

\begin{lemma}[{\cite[Lemma 3.7]{heng_nge}}]
Denote $\mathbb{K}_1 := \R$ and $\mathbb{K}_j := \C$ when $j \geq 2$.
The maps \[\beta_j: P^B_j \otimes_{\mathbb{K}_j} {}_jP^B  \to \Ba_n \text{ and } \gamma_j: \Ba_n  \to  P^B_j \otimes_{\mathbb{K}_j} {}_jP^B  (2) \] defined by:
\begin{align*} 
\beta_j(x\otimes y) &:= xy, \\
\gamma_j(1) &:= 
\begin{cases}
X_j \otimes e_j + e_j \otimes X_j + (j+1|j) \otimes (j|j+1) \\
 \hspace{8mm} + (-ie_{j+1})(j+1|j) \otimes (j|j+1)(ie_{j+1}), &\text{for } j=1;\\
X_j \otimes e_j + e_j \otimes X_j + (j-1|j) \otimes (j|j-1) + (j+1|j) \otimes (j|j+1), &\text{for } 1<j < n; \\
X_j \otimes e_j + e_j \otimes X_j + (j-1|j) \otimes (j|j-1), &\text{for } j = n,
\end{cases}
\end{align*}
are $(\Ba_n,\Ba_n)$-bimodule maps.
\end{lemma}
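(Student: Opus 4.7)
The argument splits naturally into two independent verifications.

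First, for $\beta_j$: since $\Ba_n$-multiplication is $\R$-bilinear and associative, the assignment $x \otimes y \mapsto xy$ manifestly yields an $(\Ba_n, \Ba_n)$-bimodule map $P^B_j \otimes_\R {}_jP^B \to \Ba_n$. What requires checking is that this factors through the tensor product balanced over $\mathbb{K}_j$. For $j = 1$ this is vacuous since $\mathbb{K}_1 = \R$. For $j \geq 2$, the right $\C$-action on $P^B_j$ and the left $\C$-action on ${}_jP^B$ are both implemented by multiplication by the loop $(ie_j)$, so $\beta_j\bigl(x(ie_j) \otimes y\bigr) = x(ie_j)y = \beta_j\bigl(x \otimes (ie_j)y\bigr)$ and $\beta_j$ descends as required.

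Second, for $\gamma_j$: any $(\Ba_n, \Ba_n)$-bimodule map out of $\Ba_n$ is determined by its value at $1$, provided that value commutes with every element of $\Ba_n$ in the target. So the task reduces to verifying that the proposed element $\gamma_j(1) \in P^B_j \otimes_{\mathbb{K}_j} {}_jP^B$ is central: $a \cdot \gamma_j(1) = \gamma_j(1) \cdot a$ for all $a \in \Ba_n$. By $\R$-bilinearity it suffices to check this on a generating set, namely the idempotents $e_k$, the arrows $(k|k \pm 1)$, and the loops $(ie_k)$ (for $k \geq 2$).

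The idempotent check is immediate: on either side, only the summands of $\gamma_j(1)$ whose first (respectively second) tensor factor lies in $e_k P^B_j$ (respectively $e_k\, {}_jP^B$) survive, and these sets coincide by inspection of the formula. The arrow check uses the zigzag relations $(j|j-1)(j-1|j) = (j|j+1)(j+1|j) = X_j$ to match $X_j$-type summands on both sides, while $(j-1|j)(j|j+1) = (j+1|j)(j|j-1) = 0$ kill the spurious terms. The loop check, for $j\geq 2$, uses \eqref{complex symmetry 1} and \eqref{complex symmetry 2} to slide $(ie_k)$ past arrows, together with the fact that the tensor over $\C$ allows the central factor $(ie_j)$ to cross the tensor sign freely.

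The main obstacle is the case $j = 1$, where $\mathbb{K}_1 = \R$ prevents $(ie_2)$ from moving freely across the tensor sign; this is precisely why the formula for $\gamma_1(1)$ carries the extra summand $(-ie_2)(2|1) \otimes (1|2)(ie_2)$, which supplies the missing ``imaginary piece''. Confirming centrality with respect to $(ie_2)$ requires the relation \eqref{imaginary}, $(ie_2)^2 = -e_2$, to interchange the two $(2|1) \otimes (1|2)$-type summands under the left and right actions, while centrality with respect to the arrow $(1|2)$ relies on $(1|2)(ie_2)(2|1) = 0$ to prevent unwanted length-three paths from appearing. Once these compatibilities are handled, both maps have the claimed degree shift and are bimodule homomorphisms.
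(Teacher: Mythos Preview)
The paper does not actually prove this lemma: it is quoted verbatim from the authors' earlier work \cite{heng_nge} (as Lemma~3.7 there) and is stated here without proof. So there is no argument in the present paper to compare yours against.

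That said, your proposal is the natural direct verification and is correct in outline. The check for $\beta_j$ is exactly right: multiplication is a bimodule map over $\R$, and the $\C$-balancing for $j\geq 2$ holds because both the right action on $P^B_j$ and the left action on ${}_jP^B$ are implemented by the same central-in-$e_j\Ba_n e_j$ element $(ie_j)$. For $\gamma_j$, reducing to centrality of $\gamma_j(1)$ and then checking on the generating set $\{e_k,\,(k|k\pm1),\,(ie_k)\}$ is the standard and correct strategy; you have also correctly isolated the one genuinely delicate point, namely $j=1$, where the tensor is only over $\R$ so $(ie_2)$ cannot cross the tensor symbol, and the extra summand $(-ie_2)(2|1)\otimes(1|2)(ie_2)$ together with the relations $(ie_2)^2=-e_2$ and $(1|2)(ie_2)(2|1)=0$ are exactly what make the left and right actions of $(ie_2)$ and $(1|2)$ agree. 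Your sketch would benefit from writing out at least one of the arrow checks in full (e.g.\ $(j|j-1)\cdot\gamma_j(1)=\gamma_j(1)\cdot(j|j-1)$, which uses both the zigzag relation and the vanishing of length-three paths), but the ideas are all present.
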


\begin{definition} \label{beta and gamma maps}
Define the following complexes of graded $(\Ba_n,\Ba_n)$-bimodules:
\begin{align*}
R_j &:= (0 \to P^B_j \otimes_{\mathbb{K}_j} {}_jP^B \xra{\beta_j} \Ba_n \to 0), \text{and} \\
R_j' &:= (0 \to \Ba_n  \xra{\gamma_j}  P^B_j \otimes_{\mathbb{K}_j} {}_jP^B(2) \to 0).
\end{align*}
for each $j \in \{1,2, \cdots, n\},$ with both $\Ba_n$ in cohomological degree 0, $\mathbb{K}_1 = \R$ and $\mathbb{K}_j = \C$ for $j \geq 2$.
\end{definition}

\begin{theorem}[{\cite[Theorem 3.13]{heng_nge}}] \label{Cat B action}
Let $\Kom^b(\Ba_n$-$p_r g_r mod)$ denote the homotopy category of complexes of projective, graded left $\Ba_n$-modules.
We have a (weak) $\mathcal{A}(B_n)$-action on $\Kom^b(\Ba_n$-$p_r g_r mod)$, where each standard generator $\sigma^B_j$ for $j \geq 1$ of $\mathcal{A}(B_n)$ acts on a complex $M \in \Kom^b(\Ba_n$-$p_rg_rmod)$ via $R_j$:
$$\sigma^B_j(M):= R_j \otimes_{\Ba_n} M,\text{ and } \quad  (\sigma^B_j)^{-1}(M):= R_j' \otimes_{\Ba_n} M,$$
\end{theorem}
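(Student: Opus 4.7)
The plan is to verify that the assignment $\sigma^B_j \mapsto R_j \otimes_{\Ba_n} -$ respects the defining relations \eqref{length 4 relation}, \eqref{commuting relation}, \eqref{length 3 relation} of $\cA(B_n)$ up to isomorphism in $\Kom^b(\Ba_n\text{-p}_r\text{g}_r\text{mod})$. Since all $R_j$ and $R_j'$ consist of sweet bimodules (summands of tensor products of projectives), tensoring with them is exact and preserves the homotopy category, so it suffices to exhibit homotopy equivalences of the relevant complexes of $(\Ba_n,\Ba_n)$-bimodules.

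First I would establish invertibility, i.e. $R_j \otimes_{\Ba_n} R_j' \simeq \Ba_n[0] \simeq R_j' \otimes_{\Ba_n} R_j$. Using \cref{bimodule isomorphism}, the composition $P^B_j \otimes_{\mathbb{K}_j} {}_jP^B \otimes_{\Ba_n} P^B_j \otimes_{\mathbb{K}_j} {}_jP^B$ splits as a direct sum of two shifted copies of $P^B_j \otimes_{\mathbb{K}_j} {}_jP^B$, and one checks the resulting total complex Gaussian-eliminates down to $\Ba_n$. Next, for the commuting relation \eqref{commuting relation}, \cref{bimodule isomorphism} gives ${}_jP^B \otimes_{\Ba_n} P^B_k = 0$ whenever $|j-k| > 1$, so both $R_j \otimes_{\Ba_n} R_k$ and $R_k \otimes_{\Ba_n} R_j$ collapse to the total complex
\[
\bigl(P^B_j \otimes {}_jP^B\bigr) \otimes \bigl(P^B_k \otimes {}_kP^B\bigr) \longrightarrow \bigl(P^B_j \otimes {}_jP^B\bigr) \oplus \bigl(P^B_k \otimes {}_kP^B\bigr) \longrightarrow \Ba_n,
\]
whose manifest symmetry in $j$ and $k$ yields the desired isomorphism.

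The length-3 relation \eqref{length 3 relation} lies entirely in the ``type $A$'' sub-quiver on the vertices $\{2,\dots,n\}$. Since the restriction of $\Ba_n$ to these vertices recovers the standard type $A$ zigzag algebra over $\C$, I would expand both iterated tensor products as explicit 4-term complexes using the $(\C,\C)$-bimodule cases of \cref{bimodule isomorphism}, identify the common middle terms, and produce the isomorphism exactly as in \cite{KhoSei,jensen_master}, with $\C$ replacing $\R$ as the scalar field.

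The main obstacle, and the genuinely new content of the theorem, is the length-4 relation \eqref{length 4 relation} connecting vertices $1$ and $2$. Here the bimodule $_1P^B_2$ is an $(\R,\C)$-bimodule and $_2P^B_1$ is a $(\C,\R)$-bimodule, so the iterated tensor products interleave $\R$-scalars and $\C$-scalars in a non-trivial fashion, and the loop $(ie_2)$ enters through $\gamma_2$ and through the relation $(1|2)(ie_2)(2|1)=0$. I would expand
\[
R_1 \otimes_{\Ba_n} R_2 \otimes_{\Ba_n} R_1 \otimes_{\Ba_n} R_2 \quad \text{and} \quad R_2 \otimes_{\Ba_n} R_1 \otimes_{\Ba_n} R_2 \otimes_{\Ba_n} R_1
\]
term by term, simplify each summand $P^B_{j_1} \otimes {}_{j_1}P^B \otimes \cdots$ using the dimension data following \cref{bimodule isomorphism}, and then perform Gaussian elimination of acyclic two-term subcomplexes. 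The expectation is that after cancellation both sides reduce to the same 5-term complex of bimodules, with an explicit intertwiner given by signed combinations of the basis paths (the sign ambiguity being precisely what the $(ie_2)$-relation pins down). Constructing this intertwiner and checking compatibility with the differentials on both sides is the technical core; once done, the faithfulness statement of \cref{Cat B action} follows from the authors' previous work.
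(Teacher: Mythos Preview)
The paper does not supply its own proof of this theorem: it is simply quoted from \cite[Theorem 3.13]{heng_nge} and used as a black box later on. So there is no in-paper argument to compare your proposal against.

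That said, your outline is exactly the standard strategy one expects (and essentially what \cite{heng_nge} carries out, following \cite{KhoSei}): verify invertibility via Gaussian elimination, get the distant-commuting relation from the vanishing ${}_jP^B \otimes_{\Ba_n} P^B_k = 0$ for $|j-k|>1$, import the $m_{st}=3$ check from the type $A$ computation over $\C$, and then do the new $m_{st}=4$ computation by hand using the $(\R,\C)$- and $(\C,\R)$-bimodule structures of $_1P^B_2$ and $_2P^B_1$. Two small remarks: in your commuting-relation display, the leftmost term $(P^B_j \otimes {}_jP^B)\otimes_{\Ba_n}(P^B_k \otimes {}_kP^B)$ is in fact zero (since ${}_jP^B \otimes_{\Ba_n} P^B_k = 0$), so the complex is even simpler than you wrote; and your final sentence is a misreading of the statement --- \cref{Cat B action} asserts only that the action exists, not that it is faithful. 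Faithfulness is a separate (and harder) result from \cite{heng_nge}, invoked later in the paper.
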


\subsection{Nil-categorification of the type \texorpdfstring{$B$}{B} Temperley-Lieb algebra} \label{Functorial TL}
Recall that the type $B_n$ Temperley-Lieb algebra $TL_v(B_n)$ over $\Z[v,v^{-1}]$ (in the sense of \cite[Proposition 1.3]{Green}) can be described explictly as the algebra generated by $E_1, ..., E_n$ with the following relations:
\begin{align*}
E_j^2 &= vE_j + v^{-1}E_j; \\
E_j E_k &= E_k E_j, \quad \text{if } |j-k| > 1; \\
E_j E_k E_j &= E_k, \quad \text{if } |j-k| = 1 \text{ and } j,k > 1; \\
E_j E_k E_j E_k &= 2E_j E_k, \quad \text{if } \{j, k\} = \{1,2\}.
\end{align*}

The following bimodules that satisfy a further quotient of the relations above (see \eqref{2}):
\begin{proposition} \label{prop: potential categorification TL(B)}
Define $\cU_j := P_j^B \otimes_{\mathbb{K}_j} {}_j P^B (1)$, where $\mathbb{K}_1 = \R$ and $\mathbb{K}_j = \C$ when $j \geq 2$.
The following are isomorphic as $\Z$-graded $(\Ba_n, \Ba_n)$-bimodules:
\begin{align}
\label{1} \cU_j \otimes_{\Ba_n} \cU_j &\cong \cU_j(1) \oplus \cU_j(-1); \\
\label{2} \cU_j \otimes_{\Ba_n} \cU_k &\cong 0, \quad \text{if } |j-k| > 1; \\
\label{3} \cU_j \otimes_{\Ba_n} \cU_k \otimes_{\Ba_n} \cU_j &\cong \cU_k, \quad \text{if } |j-k| = 1 \text{ and } i,j > 1; \\
\label{4} \cU_j \otimes_{\Ba_n} \cU_k \otimes_{\Ba_n} \cU_j \otimes_{\Ba_n} \cU_k &\cong (\cU_j \otimes_{\Ba_n} \cU_k) \oplus (\cU_j \otimes_{\Ba_n} \cU_k), \quad \text{if } \{j, k\} = \{1,2\}.
\end{align}
\end{proposition}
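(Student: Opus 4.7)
The plan is to reduce each of the four isomorphisms to a direct application of \cref{iPj identification}. The underlying observation is that
\[
\cU_j \otimes_{\Ba_n} \cU_k \;=\; P_j^B \otimes_{\mathbb{K}_j} \bigl({}_j P^B \otimes_{\Ba_n} P_k^B\bigr) \otimes_{\mathbb{K}_k} {}_k P^B\,(2),
\]
so that contracting the inner $\Ba_n$-tensor collapses the middle to the $(\mathbb{K}_j,\mathbb{K}_k)$-bimodule provided by \cref{iPj identification}, after which the outer expression reassembles (with carefully tracked grading shifts) into copies of $\cU_\ell$.

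Isomorphisms \eqref{1}, \eqref{2} and \eqref{3} follow by straightforward iteration of this recipe. For \eqref{1}, substituting ${}_j P^B \otimes_{\Ba_n} P_j^B \cong \mathbb{K}_j \oplus \mathbb{K}_j(-2)$ and distributing yields $\cU_j(1)\oplus\cU_j(-1)$, matching the outer shift $(2)$ against the inner shifts $0$ and $(-2)$. For \eqref{2}, the vanishing ${}_j P^B \otimes_{\Ba_n} P_k^B = 0$ for $|j-k|>1$ immediately produces the zero bimodule. For \eqref{3}, expanding $\cU_j\otimes_{\Ba_n}\cU_k\otimes_{\Ba_n}\cU_j$ and contracting the two inner tensors (each $\cong{}_\C\C_\C(-1)$) collapses the middle to a rank-one $(\C,\C)$-bimodule with internal shift $(-2)$; combined with the outer shift $(3)$, this produces $P_j^B\otimes_\C{}_jP^B(1) = \cU_j$. (The displayed claim reads $\cU_k$, but the computation naturally produces $\cU_j$; I treat this as a typographical discrepancy since both sides must be indecomposable summands with the correct endpoints.)

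The conceptual heart of the proof is \eqref{4}, where the interaction of the $\R$-linear structure at vertex $1$ with the $\C$-linear structure at vertex $2$ produces the required doubling. Fixing $\{j,k\}=\{1,2\}$ (both orderings being symmetric), expanding the quadruple tensor product and applying \cref{iPj identification} to the three inner $\Ba_n$-tensor products reduces the interior to
\[
{}_\R\C_\C(-1)\otimes_\C{}_\C\C_\R(-1)\otimes_\R{}_\R\C_\C(-1)\;\cong\;{}_\R(\C\otimes_\R\C)_\C(-3).
\]
The key input is then the classical decomposition $\C\otimes_\R\C\cong\C\oplus\C$ as $(\C,\C)$-bimodules, arising from the factorisation $\R[x]/(x^2+1)\otimes_\R\C\cong\C\times\C$. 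Restoring the outer factors $P_1^B\otimes_\R(-)\otimes_\C{}_2P^B$ and balancing the grading $(-3)+(4)=(1)$ gives two copies of $P_1^B\otimes_\R{}_2P^B(1)\cong\cU_1\otimes_{\Ba_n}\cU_2$, matching the right-hand side of \eqref{4}.

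The only real obstacle is bookkeeping: carefully tracking the alternating $\R$- and $\C$-bimodule structures and the cumulative grading shifts through each contraction, and invoking $\C\otimes_\R\C\cong\C\oplus\C$ at the correct place. Everything beyond this is a routine cancellation once \cref{iPj identification} is in hand.
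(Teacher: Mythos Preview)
Your approach is essentially the paper's: reduce everything to \cref{iPj identification} by contracting the inner $\Ba_n$-tensors, exactly as the paper does (it cites \cite{KhoSei} for \eqref{1}--\eqref{3} and writes out \eqref{4} explicitly). Your observation about \eqref{3} is correct: the computation yields $\cU_j$, not $\cU_k$, and the paper's statement carries the same typo as its displayed Temperley--Lieb relation $E_jE_kE_j = E_k$.

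One small point of difference and one slip in \eqref{4}. The paper contracts only the \emph{first two} inner tensors, obtaining ${}_\R\C_\C\otimes_\C{}_\C\C_\R\cong{}_\R\C_\R\cong\R\oplus\R$ as $(\R,\R)$-bimodules, and leaves the remaining $\cU_1\otimes_{\Ba_n}\cU_2$ untouched; this is marginally cleaner because it avoids having to re-identify that piece at the end. You instead contract all three and invoke $\C\otimes_\R\C$. That is fine, but your assertion that $\C\otimes_\R\C\cong\C\oplus\C$ \emph{as $(\C,\C)$-bimodules} is not quite right: as a $(\C,\C)$-bimodule one summand carries the conjugate right action, so the decomposition is $\C\oplus\overline{\C}$. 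This does not damage your argument, since you only tensor on the left over $\R$, and $\C$ and $\overline{\C}$ are isomorphic as $(\R,\C)$-bimodules via complex conjugation; but you should state the decomposition at the level of $(\R,\C)$-bimodules, where it is genuinely $\C\oplus\C$.
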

\begin{proof}
The isomorphisms \eqref{1}, \eqref{2} and \eqref{3} follow from the exact same proof in the type $A$ case \cite[Theorem 2.2]{KhoSei}.
  For the fourth isomorphism \eqref{4} with $j= 1, k=2$ (the other case is similar), we have the following chain of isomorphisms:
\begin{align*}
 &\cU_1 \otimes_{\Ba_n} \cU_2 \otimes_{\Ba_n} \cU_1 \otimes_{\Ba_n} \cU_2 \\
 \cong &  P_1^B \otimes_{\mathbb{R}} ( {}_1 P^B   \otimes_{\Ba_n} P_2^B) \otimes_{\mathbb{C}} ({}_2 P^B  \otimes_{\Ba_n} P_1^B) \otimes_{\mathbb{R}} {}_1 P^B  \otimes_{\Ba_n}P_2^B \otimes_{\mathbb{C}} {}_2 P^B (4)  \\
 \cong &P_1^B \otimes_{\mathbb{R}} ({}_\mathbb{R} \mathbb{C}_\mathbb{C} \otimes_{\mathbb{C}} {}_\mathbb{C} \mathbb{C}_\mathbb{R}) \otimes_{\mathbb{R}} {}_1 P^B  \otimes_{\Ba_n}P_2^B \otimes_{\mathbb{C}} {}_2 P^B (2) \\
 \cong &P_1^B \otimes_{\mathbb{R}} (\mathbb{R} \oplus \mathbb{R}) \otimes_{\mathbb{R}} {}_1 P^B  \otimes_{\Ba_n}P_2^B \otimes_{\mathbb{C}} {}_2 P^B (2) \\
 \cong &\left(P_1^B \otimes_{\mathbb{R}} {}_1 P^B  \otimes_{\Ba_n}P_2^B \otimes_{\mathbb{C}} {}_2 P^B (2) \right)  \oplus \left(P_1^B \otimes_{\mathbb{R}} {}_1 P^B  \otimes_{\Ba_n}P_2^B \otimes_{\mathbb{C}} {}_2 P^B (2) \right) \\
 \cong &(\cU_1 \otimes_{\Ba_n} \cU_2) \oplus (\cU_1 \otimes_{\Ba_n} \cU_2),
\end{align*}  
where we have used \cref{iPj identification} repeatedly. 
\end{proof}

\begin{definition}\label{defn: category of type B bimodules}
Let $\cB$ denote the monoidal category over $- \otimes_{\Ba_n} -$ generated by the $\cU_j$ defined in \cref{prop: potential categorification TL(B)}, with monoidal unit $\Ba_n$.
We use $\cK ar(\ol{\cB})$ to denote the Karoubi envelope of the additive closure $\ol{\cB}$ of $\cB$.
\end{definition}
\begin{corollary}
The monoidal, additive category $\cK ar(\ol{\cB})$ categorifies a quotient of $TL_v(B_n)$
\end{corollary}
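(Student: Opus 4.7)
The plan is to decategorify $\cK ar(\ol{\cB})$ and exhibit a surjective algebra homomorphism from $TL_v(B_n)$ onto its split Grothendieck ring $K_0(\cK ar(\ol{\cB}))$. First, I would equip $K_0(\cK ar(\ol{\cB}))$ with its natural $\Z[v,v^{-1}]$-algebra structure: the $\Z[v,v^{-1}]$-module structure is obtained by letting $v$ act as the grading shift $(1)$, and the ring multiplication is induced by the monoidal product $-\otimes_{\Ba_n}-$, with multiplicative unit $[\Ba_n]$. Since $\cK ar(\ol{\cB})$ sits inside the Krull--Schmidt category of finitely generated graded $\Ba_n$-bimodules, this structure is well-defined.

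Next, I would define
\[
\phi \colon TL_v(B_n) \longrightarrow K_0(\cK ar(\ol{\cB})), \qquad E_j \longmapsto [\cU_j],
\]
as a $\Z[v,v^{-1}]$-algebra map on generators. The main verification is that the defining relations of $TL_v(B_n)$ hold in the image, and this is essentially immediate from \cref{prop: potential categorification TL(B)}: passing to Grothendieck classes, isomorphism \eqref{1} yields $[\cU_j]^2 = (v+v^{-1})[\cU_j]$; isomorphism \eqref{2} yields $[\cU_j][\cU_k] = 0$ for $|j-k|>1$, which is strictly stronger than the commutativity relation $E_j E_k = E_k E_j$ (hence satisfied); and \eqref{3}, \eqref{4} translate verbatim into the remaining two relations. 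Thus $\phi$ descends to $TL_v(B_n)$.

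Finally, surjectivity of $\phi$ is essentially tautological: by construction, every object of $\cK ar(\ol{\cB})$ is isomorphic to a direct summand of a finite direct sum of grading shifts of monomials $\cU_{j_1} \otimes_{\Ba_n} \cdots \otimes_{\Ba_n} \cU_{j_k}$ (the empty monomial giving $\Ba_n$), so the classes of such monomials $\Z[v,v^{-1}]$-span $K_0(\cK ar(\ol{\cB}))$, and all of these lie in the image of $\phi$. Applying the first isomorphism theorem identifies $K_0(\cK ar(\ol{\cB}))$ with a quotient of $TL_v(B_n)$, giving the corollary.

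There is no real obstacle here, as \cref{prop: potential categorification TL(B)} has already done the structural work; the remaining effort is of a bookkeeping nature, namely setting up $K_0(\cK ar(\ol{\cB}))$ as a $\Z[v,v^{-1}]$-algebra and confirming that the bimodule isomorphisms translate precisely to the $TL_v(B_n)$ relations at the level of the Grothendieck ring. If one wanted to identify the quotient explicitly, the extra relation imposed beyond those of $TL_v(B_n)$ is $E_j E_k = 0$ for $|j-k|>1$, coming from \eqref{2}; but this refinement is not needed for the corollary as stated.
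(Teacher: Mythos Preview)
Your proof is correct and follows the same approach as the paper: the paper's proof is simply the one-liner ``This follows directly from comparing \cref{prop: potential categorification TL(B)} with the relations of the type $B$ Temperley-Lieb algebra $TL_v(B_n)$,'' and you have spelled out precisely what that comparison entails at the level of the split Grothendieck ring. The additional details you provide (the $\Z[v,v^{-1}]$-algebra structure on $K_0$, the explicit definition of $\phi$, and the surjectivity argument) are exactly the standard unpacking of the word ``categorifies'' that the paper leaves implicit.
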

\begin{proof}
This follows directly from comparing \cref{prop: potential categorification TL(B)} with the relations of the type $B$ Temperley-Lieb algebra $TL_v(B_n)$.
\end{proof}

\section{Soergel bimodules and the 2-braid group} \label{Soergel}
In this section, we describe the diagrammatic version of the category of Soergel bimodules as given in \cite{EW_2016}.
We will only state the minimal details required to understand the category in the type $B$ case and only with respect to a particular realisation (see next paragraph).
For the general theory, we refer the reader to \cite{EW_2016} and the references therein.
We shall also recall the definition of the 2-braid group from \cite{rouquier_2006} in the type $B$ case.

Throughout this section, $(W, S=\{s_1,s_2,...,s_n\})$ shall denote the type $B_n$ Coxeter system, with $W$ the type $B_n$ Coxeter group.
We shall fix the following balanced, but \emph{non-symmetric} realisation of $(W, S)$: we define $\mathfrak{h} := \bigoplus_{s_i \in S} \R\alpha_{s_i}^\vee$ with the set of coroots $\{\alpha_{s_i}^\vee : s_i \in S\} \subset \mathfrak{h}$ and roots $\{\alpha_{s_i} : s_i \in S\} \subset \mathfrak{h}^* = \Hom_\R(\mathfrak{h}, \R)$, where
\[
a_{s_i,s_j}:= \<\alpha_{s_i}^\vee, \alpha_{s_j}\> = 
\begin{cases}
2, &\text{ if } i=j; \\
-1, &\text{ if } i,j \geq 2, |i-j| = 1; \\
-1, &\text{ if } i=1, j=2; \\
-2, &\text{ if } i=2, j=1; \\
0, &\text{ otherwise}.
\end{cases}
\]
To this realisation, we shall set $R:= \bigoplus_{k \geq 0} S^k(\mathfrak{h}^*)$ as the $\Z$-graded symmetric $\R$-algebra on $\mathfrak{h}^*$ with deg$(\mathfrak{h}^*)$=2.
Note that $W$ acts naturally on $\mathfrak{h}^*$ and hence acts on $R$.

\subsection{Soergel Bimodules as a Diagrammatic Category}
Let $\cD_S,$ or simply $\cD$, denote the $\R$-linear monoidal category (with respect to the realisation before) defined as follows:
\begin{enumerate} [i.]
\item Objects: finite sequences $\underline{w} = s_{i_1}s_{i_2}\hdots s_{i_k}$  of letters in $S$, with monoidal structure given by concatenation and monoidal unit given by the empty sequence $\emptyset$.
\item  Homomorphisms: the Hom space $Hom_{\cD}(\uw, \uv)$ is the $\R$-span of Soergel graphs decorated by homogeneous $f \in R$ modulo the relations listed below, where the graphs have bottom boundary $\uw$ and top boundary $\uv$ (see \cite[Definition 5.1]{EW_2016} for the precise definition).
In our type $B$ cases, the possible vertices of the Soergel graphs consist of the following types:
	 \begin{enumerate} [(i)]
	 \item Univalent vertices (dots);
	 \item Trivalent vertices connecting three edges of the \emph{same} colour;
	 \item $2 m_{st}$-valent vertices connecting edges which alternate in colour between two elements $s,t$ of $S$, where in our case $m_{st} = 2, 3$ or $4$.
	 \end{enumerate}
\begin{figure} [H]
\begin{tikzpicture}[scale=0.7]
\draw[ purple, line width=0.8mm] (0,.7) -- (0,0);
\filldraw[purple] (0,.7) circle (3pt) ;
\node at (0,-.7) {\text{(i)}};
\draw[ purple, line width=0.8mm] (2,1.15) -- (2,.5);
\draw[ purple, line width=0.8mm] (2,.5) -- (1.5,0);
\draw[ purple, line width=0.8mm] (2,.5) -- (2.5,0);
\node at (2,-.7) {\text{(ii)}};

\draw[color=purple, line width=0.8mm] (5.3,1.15) -- (4,-.15);
\draw[color=green, line width=0.8mm] (4,1.15) -- (5.3,-.15);
\node at (5.3,-.7) {\text{(iii)} {\small For $m_{\textcolor{purple}{s}\textcolor{green}{t}} = 2$}};
\draw[color=blue, line width=0.8mm] (8,1.15) -- (8.65,.5);
\draw[color=blue, line width=0.8mm] (9.3,1.15) -- (8.65,.5);
\draw[color=blue, line width=0.8mm] (8.65,.5) -- (8.65,-.15);
\draw[color=red, line width=0.8mm] (8.65,.5) -- (9.3,-.15);
\draw[color=red, line width=0.8mm] (8.65,.5) -- (8,-.15);
\draw[color=red, line width=0.8mm] (8.65,.5) -- (8.65,1.15);
\node at (9.3,-.7) {\text{(iv)} {\small For $m_{\textcolor{red}{s}\textcolor{blue}{t}} = 3$}};
\draw[color=blue, line width=0.8mm] (12,1.15) -- (13.3,-.15);
\draw[color=blue, line width=0.8mm] (13.3,1.15) -- (12,-.15);
\draw[color=red, line width=0.8mm] (11.75,0.5) -- (13.5,0.5);
\draw[color=red, line width=0.8mm] (12.65,1.3) -- (12.65,-.3);
\node at (13.3,-.7) {\text{(v)} {\small For $m_{\textcolor{red}{s}\textcolor{blue}{t}} = 4$}};
\end{tikzpicture}
\begin{caption} {The possible vertices in a Soergel graph in type $B$.} \label{fig:Soergelvertex}
\end{caption}
\end{figure}
Each Soergel graph has a \textit{degree}, which accumulates $+1$ for each dots $(i)$, $-1$ for each trivalent vertices $(ii),$ $0$ for each $2m_{st}$-valent vertices $(iii)$, and the degree of each decoration $f\in R$.
The Hom spaces are then graded by the degree of the Soergel graphs. 
 As one easily checks, the relations below are indeed homogeneous. 
We remind the reader that Soergel graphs are by definition invariant under isotopy that preserves the top and bottom boundary, so the bent and rotated versions of the relations below also hold.

\begin{enumerate} [(a)]
\item The polynomial relations: 

\begin{itemize}  
\item {the barbell relation} 
\begin{equation}
 \label{barbell} 
 \centering
\begin{tikzpicture}[scale=0.7]
\draw[dashed,color=black!60] (0,0) circle (1.0);
\draw[dashed,color=black!60] (3,0) circle (1.0);
\draw[ purple, line width=0.8mm] (0,.55) -- (0,-.55);
\filldraw[purple] (0,.55) circle (3pt) ;
\filldraw[purple] (0,-.55) circle (3pt) ;
\node at (4.3, 0) {;};
\node at (1.5,0) {=};
\node at (3,0) {{\LARGE $\alpha_{\textcolor{purple}{s}}$}};
\end{tikzpicture}
\end{equation}
\item {the polynomial forcing relation}
\begin{equation}   \label{polyforce}
\centering
\begin{tikzpicture} [scale = 0.8]
\draw[dashed,color=black!60] (0,0) circle (1.0);
\draw[dashed,color=black!60] (3,0) circle (1.0);
\draw[dashed,color=black!60] (6,0) circle (1.0);
\draw[color=purple, line width=0.8mm] (0,-1) -- (0,1);
\draw[purple, line width=0.8mm] (3,-1) -- (3,1);
\draw[purple, line width=0.8mm] (6,1) -- (6,.55);
\draw[purple, line width=0.8mm] (6,-1) -- (6,-.55);
\filldraw[purple] (6,.55) circle (3pt) ;
\filldraw[purple] (6,-.55) circle (3pt) ;
\node at (1.5,0) {=};
\node at (4.5,0) {+};
\node at (7.5,0) {,};
\node at (-.5,0) {{\LARGE $f$}};
\node at (3.5,0) {{\large $\textcolor{purple}{s} (f)$}};
\node at (6,0) {{\Large $\partial_{\textcolor{purple}{s}} f$}};
\end{tikzpicture}
\end{equation}
where $\partial_{\textcolor{purple}{s}}: R \ra R^{\color{purple}{s}}(-2)$ is the Demazure operator defined by $\partial_{\textcolor{purple}{s}}(f) := \frac{f - {\textcolor{purple}{s}}(f)}{\alpha_{\textcolor{purple}{s}}}$.
\end{itemize}

\item The one colour relations:

\begin{itemize}
\item the needle relation
\begin{equation} \label{needle}
\centering
\begin{tikzpicture} [scale=0.7]
\draw[dashed,color=black!60] (0,0) circle (1.0);
\draw[purple, line width=0.8mm] (0,0) circle (.35);
\draw[purple, line width=0.8mm] (0,-.35) -- (0,-1);
\node at (1.5,0) {=};
  \node at (2,0) {{\Large $0$} ;};
\end{tikzpicture}
\end{equation}
\item the Frobenius relations
\begin{equation} \label{Frobenius}
\centering
\begin{tikzpicture} [scale = .7]
\draw[ purple, line width=0.8mm] (0,.25) -- (0,-.25);
\draw[ purple, line width=0.8mm] (0,.25) -- (-.5,.75);
\draw[ purple, line width=0.8mm] (0,.25) -- (.5,.75);
\draw[ purple, line width=0.8mm] (0,-.25) -- (-.5,-.75);
\draw[ purple, line width=0.8mm] (0,-.25) -- (.5,-.75);

\draw[ purple, line width=0.8mm] (2.75,0) -- (3.25,0);
\draw[ purple, line width=0.8mm] (2.75,0) -- (2.35,0.3);
\draw[ purple, line width=0.8mm] (2.35,0.3) -- (2.35,0.75);
\draw[ purple, line width=0.8mm] (3.25,0) -- (3.65,0.3);
\draw[ purple, line width=0.8mm] (3.65,0.3) -- (3.65,0.75);
\draw[ purple, line width=0.8mm] (2.75,0) -- (2.35,-0.3);
\draw[ purple, line width=0.8mm] (2.35,-0.3) -- (2.35,-0.75);
\draw[ purple, line width=0.8mm] (3.25,0) -- (3.65,-0.3);
\draw[ purple, line width=0.8mm] (3.65,-0.3) -- (3.65,-0.75);
\node at (1.5,0) {=};
\node at (8,0) {\text{general associativity,}};

\end{tikzpicture}
\end{equation}
\begin{equation} \label{wall}
\centering
\begin{tikzpicture} [scale=.7]
\draw[ purple, line width=0.8mm] (0,.75) -- (0,-.75);
\draw[ purple, line width=0.8mm] (0,0) -- (0.45,0);
\filldraw[purple] (0.45,0) circle (3pt) ;
\node at (1,0) {=};
\draw[ purple, line width=0.8mm] (1.75,.75) -- (1.75,-.75);
\node at (6,0) {\text{general unit}.};
\end{tikzpicture}
\end{equation}
\end{itemize}

\item The two colour relations:
\begin{itemize}
\item the two colour associativity
\begin{equation}  \label{assoc3}
\centering
\begin{tikzpicture} [scale = 0.7]
\draw[color=blue, line width=0.8mm] (-1.45,0) -- (0,0);

\draw[color=red, line width=0.8mm] (.55,0) -- (1.25,0.75);
\draw[color=red, line width=0.8mm] (.55,0) -- (1.25,-0.75);

\draw[color=blue, line width=0.8mm] (.75,.75) -- (0,0);

\draw[color=blue, line width=0.8mm] (0,0) -- (.75,-0.75);

\draw[color=red, line width=0.8mm] (0,0) -- (-.75,-0.75);

\draw[color=red, line width=0.8mm] (-.75,.75) -- (0,0);

\draw[color=red, line width=0.8mm] (0,0) -- (.55,0);

\node at (2,0) {=};

\draw[color=red, line width=0.8mm] (3.85,0.35) -- (4.375,0.35);

\draw[color=red, line width=0.8mm] (4.35,0.35) -- (4.7,0.7);

\draw[color=red, line width=0.8mm] (3.85,-0.35) -- (4.375, -0.35);
\draw[color=red, line width=0.8mm] (4.35,-0.35) -- (4.7, -0.7);
\draw[color=red, line width=0.8mm] (3.35,1) -- (3.85,.35);
\draw[color=red, line width=0.8mm] (3.85,.35) -- (3.55,-.02);
\draw[color=red, line width=0.8mm] (3.35,-1) -- (3.85,-.35);
\draw[color=red, line width=0.8mm] (3.85,-.35) -- (3.55,0.01);

\draw[color=blue, line width=0.8mm] (2.5,0) -- (3,0);
\draw[color=blue, line width=0.8mm] (3,0) -- (3.35,0.35);
\draw[color=blue, line width=0.8mm] (3.325,0.35) -- (3.85,0.35);
\draw[color=blue, line width=0.8mm] (3,0) -- (3.35, -0.35);
\draw[color=blue, line width=0.8mm] (3.325,-0.35) -- (3.85, -0.35);
\draw[color=blue, line width=0.8mm] (3.85,.35) -- (4.15,-.02);
\draw[color=blue, line width=0.8mm] (4.35,1) -- (3.85,.35);
\draw[color=blue, line width=0.8mm] (3.85,-.35) -- (4.15,0.01);
\draw[color=blue, line width=0.8mm] (4.35,-1) -- (3.85,-.35);

\node at (7,0) {\text{if $m_{\textcolor{red}{s} \textcolor{blue}{t}} = 3, $ }};

\end{tikzpicture}
\end{equation}
\begin{equation}  \label{assoc4}
\centering
\begin{tikzpicture} [scale = 0.7]
\draw[color=red, line width=0.8mm] (-1.45,0) -- (.55,0);
\draw[color=red, line width=0.8mm] (.55,0) -- (1.25,0.75);
\draw[color=red, line width=0.8mm] (0,0.75) -- (0,-0.75);
\draw[color=red, line width=0.8mm] (.55,0) -- (1.25,-0.75);
\draw[color=blue, line width=0.8mm] (.75,.75) -- (-.75,-0.75);
\draw[color=blue, line width=0.8mm] (-.75,.75) -- (.75,-0.75);
\node at (2,0) {=};

\draw[color=red, line width=0.8mm] (2.5,0) -- (3,0);
\draw[color=red, line width=0.8mm] (3,0) -- (3.35,0.35);
\draw[color=red, line width=0.8mm] (3.325,0.35) -- (4.375,0.35);
\draw[color=red, line width=0.8mm] (4.35,0.35) -- (4.7,0.7);

\draw[color=red, line width=0.8mm] (3,0) -- (3.35, -0.35);
\draw[color=red, line width=0.8mm] (3.325,-0.35) -- (4.375, -0.35);
\draw[color=red, line width=0.8mm] (4.35,-0.35) -- (4.7, -0.7);

\draw[color=red, line width=0.8mm] (3.85,1) -- (3.85,-1);
\draw[color=blue, line width=0.8mm] (3.35,1) -- (4.15,-.02);
\draw[color=blue, line width=0.8mm] (4.35,1) -- (3.55,-.02);
\draw[color=blue, line width=0.8mm] (3.35,-1) -- (4.15,0.01);
\draw[color=blue, line width=0.8mm] (4.35,-1) -- (3.55,0.01);

\node at (7,0) {\text{if $m_{\textcolor{red}{s} \textcolor{blue}{t}} = 4 $};};

\end{tikzpicture}
\end{equation}

\item the dot-crossing relations

\end{itemize}
\begin{equation} \label{mst2}
\begin{tikzpicture} 
\draw[color=red, line width=0.8mm] (-1.45,0) -- (.55,0);

\draw[color=blue, line width=0.8mm] (0,0.75) -- (0,-0.75);

\filldraw[red]  (0.55,0) circle (3pt) ;

\node at (2,0) {=};

\draw[color=red, line width=0.8mm] (2.5,0) -- (3.5,0);
\filldraw[red]  (3.5,0) circle (3pt) ;

\filldraw[blue]  (4.3,0) circle (3pt) ;
\draw[color=blue, line width=0.8mm] (4.3,0) -- (4.85,0);
\draw[color=blue, line width=0.8mm] (4.85,-.75) -- (4.85,.75);

\draw[color=red, line width=0.8mm] (2.5,0) -- (3,0);

\node at (7,0) {\text{if $m_{\textcolor{red}{s} \textcolor{blue}{t}} = 2 $},};

\end{tikzpicture}
\end{equation}
\begin{equation} \label{mst3}
\centering
\begin{tikzpicture} 
\draw[color=blue, line width=0.8mm] (-1.45,0) -- (0,0);

\filldraw[red]  (0.55,0) circle (3pt) ;

\draw[color=blue, line width=0.8mm] (.75,.75) -- (0,0);

\draw[color=blue, line width=0.8mm] (0,0) -- (.75,-0.75);

\draw[color=red, line width=0.8mm] (0,0) -- (-.75,-0.75);

\draw[color=red, line width=0.8mm] (-.75,.75) -- (0,0);

\draw[color=red, line width=0.8mm] (0,0) -- (.55,0);

\node at (2,0) {=};

\draw[color=red, line width=0.8mm] (3.825,.45) -- (3.825,.75);
\draw[color=red, line width=0.8mm] (3.825,-.45) -- (3.825,-.75);

\draw[color=blue, line width=0.8mm] (4.65,0) -- (4.85,0);
\draw[color=blue, line width=0.8mm] (4.85,-.75) -- (4.85,.75);

\draw[color=blue, line width=0.8mm] (2.5,0) -- (3,0);
\filldraw[fill=white, draw=black,rounded corners] (3,-.45) rectangle (4.65,.45);

\node at (3.85,0) {$JW_2$}; 

\node at (7,0) {\text{if $m_{\textcolor{red}{s} \textcolor{blue}{t}} = 3 $},};

\end{tikzpicture}
\end{equation}
\begin{equation} \label{mst4}
\begin{tikzpicture} 
\draw[color=red, line width=0.8mm] (-1.45,0) -- (.55,0);

\draw[color=red, line width=0.8mm] (0,0.75) -- (0,-0.75);

\filldraw[red]  (0.55,0) circle (3pt) ;
\draw[color=blue, line width=0.8mm] (.75,.75) -- (-.75,-0.75);
\draw[color=blue, line width=0.8mm] (-.75,.75) -- (.75,-0.75);
\node at (2,0) {=};

\draw[color=red, line width=0.8mm] (2.5,0) -- (3,0);

\draw[color=blue, line width=0.8mm] (3.55,.45) -- (3.55,.75);
\draw[color=blue, line width=0.8mm] (3.55,-.45) -- (3.55,-.75);

\draw[color=red, line width=0.8mm] (4.1,.45) -- (4.1,.75);
\draw[color=red, line width=0.8mm] (4.1,-.45) -- (4.1,-.75);

\draw[color=blue, line width=0.8mm] (4.6,0) -- (4.85,0);
\draw[color=blue, line width=0.8mm] (4.85,-.75) -- (4.85,.75);

\draw[color=red, line width=0.8mm] (2.5,0) -- (3,0);
\filldraw[fill=white, draw=black,rounded corners] (3,-.45) rectangle (4.65,.45);

\node at (3.85,0) {{ $JW_3$}};

\node at (7,0) {\text{if $m_{\textcolor{red}{s} \textcolor{blue}{t}} = 4 $},};

\end{tikzpicture}
\end{equation}
where
\begin{align*}
  JW_2 &= 
\begin{tikzcd}
\draw[dashed,color=black!60] (0,0) circle (0.7);
\draw[blue, line width=0.8mm] (0,.7) -- (0,-.7);
\draw[red, line width=0.8mm] (-.2,0) -- (-.7,0);
\draw[red, line width=0.8mm] (.2,0) -- (.7,0);
\filldraw[red] (-.3,0) circle (3pt) ;
\filldraw[red] (.3,0) circle (3pt) ;
\end{tikzcd} 
- \frac{1}{a_{t,s}}
\begin{tikzcd}
\draw[dashed,color=black!60] (0,0) circle (0.7);
\draw[red, line width=0.8mm] (.7,0) -- (-.7,0);
\draw[blue, line width=0.8mm] (0,-.2) -- (0,-.7);
\draw[blue, line width=0.8mm] (0,.2) -- (0,.7);
\filldraw[blue] (0,-.3) circle (3pt) ;
\filldraw[blue] (0,.3) circle (3pt) ;
\end{tikzcd} \ ,
\\
JW_3 &= 
\begin{tikzcd}
\draw[dashed,color=black!60] (0,0) circle (0.7);
\draw[red, line width=0.8mm] (.125,.68) -- (.125,-.68);
\draw[blue, line width=0.8mm] (-.125,.68) -- (-.125,-.68);
\draw[red, line width=0.8mm] (-.5,0) -- (-.7,0);
\draw[blue, line width=0.8mm] (.45,0) -- (.7,0);
\filldraw[red] (-.425,0) circle (3pt) ;
\filldraw[blue] (.425,0) circle (3pt) ;
\end{tikzcd} 
- \frac{a_{t,s}}{a_{s,t}a_{t,s}-1} 
\begin{tikzcd}
\draw[dashed,color=black!60] (0,0) circle (0.7);
\draw[red, line width=0.8mm] (-.5,0) -- (-.7,0);
\draw[red, line width=0.8mm] (.23,0.4) -- (.23,0.66);
\draw[red, line width=0.8mm] (.23,-0.4) -- (.23,-0.66);
\draw[blue, line width=0.8mm] (0,0) -- (-.45,.58);
\draw[blue, line width=0.8mm] (0,0) -- (-.45,-.58);
\draw[blue, line width=0.8mm] (0,0) -- (.7,0);
\filldraw[red] (-.4,0) circle (3pt) ;
\filldraw[red] (.23,0.3) circle (3pt) ;
\filldraw[red] (.23,-0.3) circle (3pt) ;
\end{tikzcd} 
- \frac{a_{s,t}}{a_{s,t}a_{t,s}-1} 
\begin{tikzcd}
\draw[dashed,color=black!60] (0,0) circle (0.7);
\draw[blue, line width=0.8mm] (.5,0) -- (.7,0);
\draw[blue, line width=0.8mm] (-.23,0.4) -- (-.23,0.66);
\draw[blue, line width=0.8mm] (-.23,-0.4) -- (-.23,-0.66);
\draw[red, line width=0.8mm] (0,0) -- (.45,.58);
\draw[red, line width=0.8mm] (0,0) -- (.45,-.58);
\draw[red, line width=0.8mm] (0,0) -- (-.7,0);
\filldraw[blue] (.4,0) circle (3pt) ;
\filldraw[blue] (-.23,0.3) circle (3pt) ;
\filldraw[blue] (-.23,-0.3) circle (3pt) ;
\end{tikzcd} 
+ \frac{1}{a_{s,t}a_{t,s}-1} 
\begin{tikzcd}
\draw[dashed,color=black!60] (0,0) circle (0.7);
\draw[blue, line width=0.8mm] (.13,0) -- (.7,0);
\draw[blue, line width=0.8mm] (.13,0) -- (-.45,.58);
\draw[red, line width=0.8mm] (-.15,0) -- (.45,-.58);
\draw[red, line width=0.8mm] (-.15,0) -- (-.7,0);
\draw[red, line width=0.8mm] (.23,0.4) -- (.25,0.66);
\draw[blue, line width=0.8mm] (-.23,-0.4) -- (-.25,-0.66);
\filldraw[red] (.23,0.3) circle (3pt) ;
\filldraw[blue] (-.23,-0.3) circle (3pt) ;
\end{tikzcd} \\
&+ \frac{1}{a_{s,t}a_{t,s}-1}
\begin{tikzcd}
\draw[dashed,color=black!60] (0,0) circle (0.7);
\draw[blue, line width=0.8mm] (.13,0) -- (.7,0);
\draw[red, line width=0.8mm] (.23,-0.4) -- (.23,-0.66);
\draw[blue, line width=0.8mm] (-.23,0.4) -- (-.23,0.66);
\draw[red, line width=0.8mm] (-.13,0) -- (.45,.58);
\draw[red, line width=0.8mm] (-.13,0) -- (-.7,0);
\draw[blue, line width=0.8mm] (.13,0) -- (-.45,-.58);
\filldraw[red] (.23,-0.3) circle (3pt) ;
\filldraw[blue] (-.23,0.3) circle (3pt) ;
\end{tikzcd} \ ; \\	
\end{align*}  
\item The three colour relations or ``Zamolodzhikov'' relations:
\begin{itemize}
\item A sub-Coxeter system of type $A_1 \times I_2(m),$ that is, the Coxeter graph of the parabolic subgroup genetated by $( $\textcolor{red}{s}$,$\textcolor{blue}{t}$,$\textcolor{green}{u}$)$ is 
\begin{tikzcd}
\filldraw[black]  (0,0) circle (2pt) ;
\filldraw[black]  (0.7,0) circle (2pt) ;
\filldraw[black]  (1.4,0) circle (2pt) ;
\draw [line width = 0.2mm] (0,0) -- (0.7,0);
\draw  (0.35,.2) node {{m}};
\draw (0,-.35) node {{\textcolor{red}{s}}};
\draw  (0.7,-.35) node {{\textcolor{blue}{t}}};
\draw  (1.4,-.35) node {{\textcolor{green}{u}}};
\end{tikzcd}
\begin{equation}    \label{Zamo4}
\centering
\begin{tikzpicture} [scale = 0.7]

\draw[color=red, line width=0.8mm] (0,0) -- (.3,0.9);
\draw[color=red, line width=0.8mm] (0,0) -- (-.55,0.75);
\draw[color=red, line width=0.8mm] (0,0) -- (.65,-0.75);
\draw[color=red, line width=0.8mm] (-.2,-.9) -- (0,0);

\draw[color=blue, line width=0.8mm] (0,0) -- (-.55,-0.75);
\draw[color=blue, line width=0.8mm] (0,0) -- (.3,-0.9);
\draw[color=blue, line width=0.8mm] (0,0) -- (-.2,.9);
\draw[color=blue, line width=0.8mm] (.65,.75) -- (0,0);
\draw[color=green!200, line width=0.8mm]  (.9, .3) .. controls (-.1, .7) and (.1,.7) .. (-.8, .3) ;

\node at (2,0) {=};

\draw[color=red, line width=0.8mm] (4,0) -- (4.3,0.9);
\draw[color=red, line width=0.8mm] (4,0) -- (3.45,0.75);
\draw[color=red, line width=0.8mm] (4,0) -- (4.65,-0.75);
\draw[color=red, line width=0.8mm] (3.8,-.9) -- (4,0);

\draw[color=blue, line width=0.8mm] (4,0) -- (3.45,-0.75);
\draw[color=blue, line width=0.8mm] (4,0) -- (4.3,-0.9);
\draw[color=blue, line width=0.8mm] (4,0) -- (3.8,.9);
\draw[color=blue, line width=0.8mm] (4.65,.75) -- (4,0);
\draw[color=green!200, line width=0.8mm]  (4.9, -.3) .. controls (3.9, -.7) and (4.1,-.7) .. (3.2, -.3) ;

\node at (8,0) {\text{if $m_{\textcolor{red}{s} \textcolor{blue}{t}} = 4 $},};
\node at (-4,0) {$(B_2 \times A_1)$};
\end{tikzpicture}
\end{equation}
\begin{equation}  \label{Zamo3}
\centering
\begin{tikzpicture} [scale = 0.7]

\draw[color=red, line width=0.8mm] (0,0) -- (0,.85);
\draw[color=red, line width=0.8mm] (0,0) -- (-.5,-0.75);
\draw[color=red, line width=0.8mm] (0,0) -- (.5,-0.75);

\draw[color=blue, line width=0.8mm] (0,0) -- (0,-.85);
\draw[color=blue, line width=0.8mm] (0,0) -- (-.5,0.75);
\draw[color=blue, line width=0.8mm] (0,0) -- (.5,0.75);

\draw[color=green!200, line width=0.8mm]  (.9, .3) .. controls (-.1, .7) and (.1,.7) .. (-.8, .3) ;

\node at (2,0) {=};

\draw[color=red, line width=0.8mm] (4,0) -- (4,.85);
\draw[color=red, line width=0.8mm] (4,0) -- (3.5,-0.75);
\draw[color=red, line width=0.8mm] (4,0) -- (4.5,-0.75);

\draw[color=blue, line width=0.8mm] (4,0) -- (4,-.85);
\draw[color=blue, line width=0.8mm] (4,0) -- (3.5,0.75);
\draw[color=blue, line width=0.8mm] (4,0) -- (4.5,0.75);
\draw[color=green!200, line width=0.8mm]  (4.9, -.3) .. controls (3.9, -.7) and (4.1,-.7) .. (3.2, -.3) ;

\node at (8,0) {\text{if $m_{\textcolor{red}{s} \textcolor{blue}{t}} = 3 $},};
\node at (-4,0) {$(A_2 \times A_1)$};
\end{tikzpicture}
\end{equation}
\begin{equation}  \label{Zamo2}
\centering
\begin{tikzpicture} [scale = 0.7]

\draw[color=red, line width=0.8mm] (-.5,-.75) -- (0.5,.75);
\draw[color=blue, line width=0.8mm] (.5,-.75) -- (-0.5,.75);

\draw[color=green!200, line width=0.8mm]  (.9, .3) .. controls (-.1, .7) and (.1,.7) .. (-.8, .3) ;

\node at (2,0) {=};

\draw[color=red, line width=0.8mm] (3.5,-.75) -- (4.5,.75);
\draw[color=blue, line width=0.8mm] (4.5,-.75) -- (3.5,.75);

\draw[color=green!200, line width=0.8mm]  (4.9, -.3) .. controls (3.9, -.7) and (4.1,-.7) .. (3.2, -.3) ;

\node at (8,0) {\text{if $m_{\textcolor{red}{s} \textcolor{blue}{t}} =  2 $},};

\node at (-3.5,0) {$(A_1 \times A_1 \times A_1)$};
\end{tikzpicture}
\end{equation}
\begin{equation} \label{A3}
\includegraphics[scale=1]{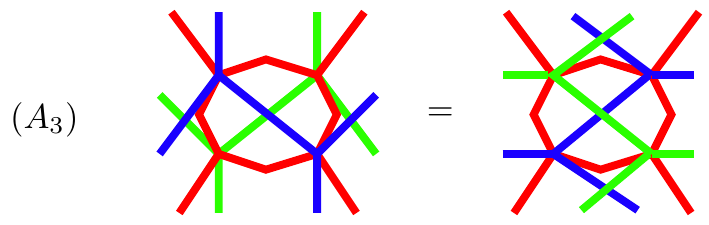} \text{,}
\end{equation} 
\begin{equation} \label{B3}
\includegraphics[scale=1]{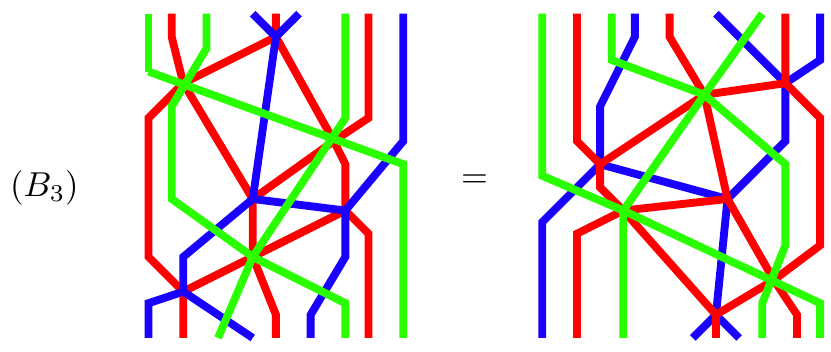}.
\end{equation}
\end{itemize}
\end{enumerate} 
 \end{enumerate} 
 
This concludes the definition of $\cD.$ 
 
 Note that the one colour relations from \eqref{Frobenius} and \eqref{wall} combined with isotopy invariance encode the datum of the objects $\color{purple} s$ being Frobenius objects\footnote{Note that they are actually nilpotent, as they also satisfy the relation: 
    \begin{tikzpicture}[scale=0.3, baseline]
        \draw[purple, line width=0.5mm] (0,.35) -- (0,1);
        \draw[purple, line width=0.5mm] (0,0) circle (.35);
        \draw[purple, line width=0.5mm] (0,-.35) -- (0,-1);
        \node at (1.5,.2) {= 0 };
    \end{tikzpicture} 
(implied by other relations).}.
 For completeness, we shall spell out of the corresponding one colour relations that comes with it.
 These relations are expressed in term of cups and caps defined as follows:
 \begin{center}
 \begin{tikzcd}
 \draw[purple, line width=0.8mm] (-.6,-0.5) arc(180 : 0: 0.3);
 \draw[purple, line width=0.8mm] (-.6,-0.5) -- (-.6, -.7);
  \draw[purple, line width=0.8mm] (0,-0.5) -- (0, -.7);

\node at (.5, -0.5) {:=};
\draw[ purple, line width=0.8mm] (1.5,0) -- (1.5,-.45);
\draw[ purple, line width=0.8mm] (1.5,-.4) .. controls (1.3,-.5) and (1.25, -.55)..  (1.2,-0.8);
\draw[ purple, line width=0.8mm] (1.5,-.4) .. controls (1.7,-.5) and (1.75, -.55)..  (1.8,-0.8);
\filldraw[purple] (1.5,0) circle (3pt) ;

 \node at (-3,-.5) {\text{The definition of cap}};
  \end{tikzcd} 
  \end{center}
  \begin{center}
   \begin{tikzcd}
 \draw[purple, line width=0.8mm] (-.6,0.5) arc(-180 : -0: 0.3);
 \draw[purple, line width=0.8mm] (-.6,0.5) -- (-.6, .8);
  \draw[purple, line width=0.8mm] (0,0.5) -- (0, .8);

\node at (.5, 0.5) {:=};
\draw[ purple, line width=0.8mm] (1.5,0) -- (1.5,.45);
\draw[ purple, line width=0.8mm] (1.5,.4) .. controls (1.3,.5) and (1.25, .55)..  (1.2,0.8);
\draw[ purple, line width=0.8mm] (1.5,.4) .. controls (1.7,.5) and (1.75, .55)..  (1.8,0.8);
\filldraw[purple] (1.5,0) circle (3pt) ;

 \node at (-3,.5) {\text{The definition of cup}};
 
  \end{tikzcd}.
  \end{center}
 \noindent The trivalent vertices give multiplication or comultiplication while the dots provide unit or counit.
   To obtain \eqref{Frobenius}, we can rotate \eqref{assocmult} (resp. \eqref{coassoccomult}) using a cup (resp. a cap) and apply \eqref{comult rot mult} (resp. \eqref{mult rot comult}).
   On the other hand, \eqref{mult rot comult} to \eqref{unit rot counit} records the cyclicity of all morphisms. 
   Therefore, \eqref{wall} can be replaced by \eqref{enddot counit comult} and \eqref{startdot unit multi}.
Here is the list of them:
 \begin{equation} \label{assocmult} \begin{tikzcd}
\draw[ purple, line width=0.8mm] (1.5,1.2) -- (1.5,.65);
\draw[ purple, line width=0.8mm] (1.5,.7) .. controls (1.2,.6) and (1, .3)..  (.9,-.4);
\draw[ purple, line width=0.8mm] (1.5,.7) .. controls (1.7,.55) and (1.75, .45)..  (1.8,.2);

\draw[ purple, line width=0.8mm] (1.8,.2) .. controls (1.45,0) and (1.5, -.55)..  (1.45,-.4);
\draw[ purple, line width=0.8mm] (1.8,.2) .. controls (2.15,0) and (2.1, -.55)..  (2.15,-.4);

\node at (2.55, 0.55) {=};

\draw[ purple, line width=0.8mm] (3.6,1.2) -- (3.6,.65);
\draw[ purple, line width=0.8mm] (3.6,.7) .. controls (3.9,.6) and (4.1, .3)..  (4.2,-.4);
\draw[ purple, line width=0.8mm] (3.6,.7) .. controls (3.4,.55) and (3.35, .45)..  (3.3,.2);

\draw[ purple, line width=0.8mm] (3.3,.2) .. controls (3.65,0) and (3.6, -.55)..  (3.65,-.4);
\draw[ purple, line width=0.8mm] (3.3,.2) .. controls (2.95,0) and (2.9, -.55)..  (2.95,-.4);

 \node at (8,.55) {\text{ the associativity of the multiplication}.};
 
  \end{tikzcd}
\end{equation}

 \begin{equation}  \label{coassoccomult}
 \begin{tikzcd}
\draw[ purple, line width=0.8mm] (1.5,-0.1) -- (1.5,.45);
\draw[ purple, line width=0.8mm] (1.5,.4) .. controls (1.2,.5) and (1, .8)..  (.9,1.5);
\draw[ purple, line width=0.8mm] (1.5,.4) .. controls (1.7,.55) and (1.75, .65)..  (1.8,.9);

\draw[ purple, line width=0.8mm] (1.8,.9) .. controls (1.45,1.1) and (1.5, 1.45)..  (1.45,1.5);
\draw[ purple, line width=0.8mm] (1.8,.9) .. controls (2.15,1.1) and (2.1, 1.45)..  (2.15,1.5);

\node at (2.55, 0.55) {=};

\draw[ purple, line width=0.8mm] (3.6,-0.1) -- (3.6,.45);
\draw[ purple, line width=0.8mm] (3.6,.4) .. controls (3.9,.5) and (4.1, .8)..  (4.2,1.5);
\draw[ purple, line width=0.8mm] (3.6,.4) .. controls (3.4,.55) and (3.35, .65)..  (3.3,.9);

\draw[ purple, line width=0.8mm] (3.3,.9) .. controls (3.65,1.1) and (3.6, 1.45)..  (3.65,1.5);
\draw[ purple, line width=0.8mm] (3.3,.9) .. controls (2.95,1.1) and (2.9, 1.45)..  (2.95,1.5);

 \node at (8,.55) {\text{ the coassociativity of the comultiplication.}};
 
  \end{tikzcd}
\end{equation} 

 \begin{equation} \label{enddot counit comult}\begin{tikzcd}
\draw[ purple, line width=0.8mm] (0,0) -- (0,.65);
\draw[ purple, line width=0.8mm] (0,.6) .. controls (-.2,.7) and (-.25, .75)..  (-.3,1.1);
\draw[ purple, line width=0.8mm] (0,.6) .. controls (.2,.7) and (.25, .75)..  (.25,.75);

\node at (.9, 0.4) {=};
\draw[ purple, line width=0.8mm] (2.8,0) -- (2.8,.65);
\draw[ purple, line width=0.8mm] (2.8,.6) .. controls (2.6,.7) and (2.55, .75)..  (2.55,0.75);
\draw[ purple, line width=0.8mm] (2.8,.6) .. controls (3.0,.7) and (3.05, .75)..  (3.1,1.1);

\draw[ purple, line width=0.8mm] (1.4, 0) -- (1.4, 1);

\filldraw[purple] (.25,.75) circle (3pt) ;

\filldraw[purple] (2.55,0.75) circle (3pt) ;

\node at (2, 0.4) {=};

 \node at (7.5,.5) {\text{the enddot as counit for the comultiplication.}};
 
  \end{tikzcd}
\end{equation} 

\begin{equation} \label{startdot unit multi}    
\begin{tikzcd}
\draw[ purple, line width=0.8mm] (0,1) -- (0,.35);
\draw[ purple, line width=0.8mm] (0,.4) .. controls (-.2,.3) and (-.25, .25)..  (-.3,-0.1);
\draw[ purple, line width=0.8mm] (0,.4) .. controls (.2,.3) and (.25, .25)..  (.25,.25);

\node at (.9, 0.4) {=};
\draw[ purple, line width=0.8mm] (2.8,1) -- (2.8,.35);
\draw[ purple, line width=0.8mm] (2.8,.4) .. controls (2.6,.3) and (2.55, .25)..  (2.55,0.25);
\draw[ purple, line width=0.8mm] (2.8,.4) .. controls (3.0,.3) and (3.05, .25)..  (3.1,-0.1);

\draw[ purple, line width=0.8mm] (1.4, 0) -- (1.4, 1);

\filldraw[purple] (.25,.25) circle (3pt) ;

\filldraw[purple] (2.55,0.25) circle (3pt) ;

\node at (2, 0.4) {=};

 \node at (7.5,.5) {\text{the startdot as unit for the multiplication.}};
 
  \end{tikzcd}
\end{equation} 
 \begin{equation} \label{selfadjoint}    \begin{tikzcd}
\draw[purple, line width=0.8mm] (-.3,0.5) arc(180 : 0: 0.25);
\draw[ purple, line width=0.8mm] (.2,.5) .. controls (0.1,-.1) and (-.2, 0)..  (-.3,-.5);
\draw[purple, line width=0.8mm] (-.3,0.5) arc(0 : -180: 0.25);
\draw[ purple, line width=0.8mm] (-.8,.5) .. controls (-.7,1.1) and (-.4, 1)..  (-.3,1.5);

\node at (.75, 0.4) {=};
\draw[ purple, line width=0.8mm] (1.5,1.5) -- (1.5,-.5);

\node at (2.1, 0.4) {=};

\draw[purple, line width=0.8mm] (3.2,0.5) arc(-180 : 0: 0.25);
\draw[ purple, line width=0.8mm] (3.7,.5) .. controls (3.6,1.1) and (3.3, 1)..  (3.2,1.5);
\draw[purple, line width=0.8mm] (3.2,0.5) arc(0 : 180: 0.25);
\draw[ purple, line width=0.8mm] (2.7,.5) .. controls (2.8,-.1) and (3.1, 0)..  (3.2,-.5);

 \node at (7,.5) {\text{the self-biadjointness of $B_s$.}};
 
  \end{tikzcd}
\end{equation} 
 \begin{equation} \label{mult rot comult} 
   \begin{tikzcd}
\draw[ purple, line width=0.8mm] (-.3,0) -- (-.3,0.45);
\draw[ purple, line width=0.8mm] (-.3,.4) .. controls (-.5,.5) and (-.55, .55)..  (-.6,0.8);
\draw[ purple, line width=0.8mm] (-0.3,.4) .. controls (-.1,.7) and (0.17, .7)..  (0.2,0);

\node at (.6, 0.4) {=};
\draw[ purple, line width=0.8mm] (1.5,0.8) -- (1.5,.35);
\draw[ purple, line width=0.8mm] (1.5,.4) .. controls (1.3,.3) and (1.25, .25)..  (1.2,0);
\draw[ purple, line width=0.8mm] (1.5,.4) .. controls (1.7,.3) and (1.75, .25)..  (1.8,0);

\node at (2.3, 0.4) {=};

\draw[ purple, line width=0.8mm] (3.2,0) -- (3.2,0.45);
\draw[ purple, line width=0.8mm] (3.2,.4) .. controls (3.4,.5) and (3.45, .55)..  (3.5,0.8);
\draw[ purple, line width=0.8mm] (3.2,.4) .. controls (3,.7) and (2.8, .7)..  (2.7,0);

 \node at (8,.5) {\text{the mult. is a ``half rotation'' of the comult.}};
 
  \end{tikzcd}
\end{equation} 
\begin{equation}  \label{comult rot mult}
   \begin{tikzcd}
\draw[ purple, line width=0.8mm] (-.3,0.8) -- (-.3,0.35);
\draw[ purple, line width=0.8mm] (-.3,.4) .. controls (-.5,.3) and (-.55, .25)..  (-.6,0);
\draw[ purple, line width=0.8mm] (-0.3,.4) .. controls (-.1,.1) and (0.17, .1)..  (0.2,0.8);

\node at (.6, 0.4) {=};
\draw[ purple, line width=0.8mm] (1.5,0) -- (1.5,.45);
\draw[ purple, line width=0.8mm] (1.5,.4) .. controls (1.3,.5) and (1.25, .55)..  (1.2,0.8);
\draw[ purple, line width=0.8mm] (1.5,.4) .. controls (1.7,.5) and (1.75, .55)..  (1.8,0.8);

\node at (2.3, 0.4) {=};

\draw[ purple, line width=0.8mm] (3.2,0.8) -- (3.2,0.35);
\draw[ purple, line width=0.8mm] (3.2,.4) .. controls (3.4,.3) and (3.45, .25)..  (3.5,0);
\draw[ purple, line width=0.8mm] (3.2,.4) .. controls (3,.1) and (2.8, .1)..  (2.7,0.8);

 \node at (8,.5) {\text{the comult. is a half ``rotation'' of the mult.}};
 
  \end{tikzcd}
\end{equation} 
 \begin{equation} \label{counit rot unit}
\begin{tikzcd}
 \draw[purple, line width=0.8mm] (-.6,0.5) arc(180 : 0: 0.3);
  \draw[purple, line width=0.8mm] (0,0.5) -- (0, 0);
\filldraw[purple] (-.6,0.45) circle (3pt) ;

 \draw[purple, line width=0.8mm] (2,0.5) arc(180 : 0: 0.3);
 \draw[purple, line width=0.8mm] (2,0.5) -- (2, 0);
\filldraw[purple] (2.6,0.45) circle (3pt) ;

\node at (.5, 0.45) {=};
\draw[ purple, line width=0.8mm] (1,0) -- (1,.9);
\filldraw[purple] (1,0.9) circle (3pt) ;

\node at (1.5, 0.5) {=};

 \node at (7,.5) {\text{the counit is a ``rotation'' of the unit.}};
 \end{tikzcd}
\end{equation}  
\begin{equation} \label{unit rot counit}
\begin{tikzcd}
 \draw[purple, line width=0.8mm] (-.6,0.5) arc(-180 : -0: 0.3);
  \draw[purple, line width=0.8mm] (0,0.5) -- (0, 1);
  \draw[purple, line width=0.8mm] (-.6,0.5) -- (-.6, .6);
\filldraw[purple] (-.6,0.55) circle (3pt) ;
 \draw[purple, line width=0.8mm] (2,0.5) arc(-180 : -0: 0.3);
 \draw[purple, line width=0.8mm] (2,0.5) -- (2, 1);
\filldraw[purple] (2.6,0.55) circle (3pt) ;
\node at (.5, 0.5) {=};
\draw[ purple, line width=0.8mm] (1,0.2) -- (1,1);
\filldraw[purple] (1,0.2) circle (3pt) ;
\node at (1.5, 0.5) {=};
 \node at (7,.5) {\text{the unit is a ``rotation'' of the counit.}};
 \end{tikzcd}
\end{equation} 

This marks the end of the supplementary one colour relations.

It is a theorem of Elias--Williamson \cite[Theorem 6.28]{EW_2016} that the Karoubi envelope $\cK ar(\overline{\cD})$ of the graded additive closure $\overline{\cD}$ of $\cD$
  is equivalent to the category of Soergel bimodules as additive, monoidal categories.
  In particular, $\cK ar(\overline{\cD})$ categorifies the type $B_n$ Hecke algebra.

\subsection{The 2-Braid Group}
The elementary Rouquier complexes corresponding to a simple reflection $s \in S$ are defined as follows (cf. \cite{rouquier_2006}):
\begin{align*}
F_s &:= 0 \ra B_s \xra{\begin{tikzcd}
\draw[ purple, line width=0.7mm] (0,0) -- (0,-.35);
\filldraw[purple] (0,0) circle (3pt) ;
\end{tikzcd} } R(1) \ra 0; \\
E_s = F_{s^{-1}} &:= 0 \ra R(-1) \xra{\begin{tikzcd}
\draw[ purple, line width=0.7mm] (0,0) -- (0,-.35);
\filldraw[purple] (0,-.35) circle (3pt) ;
\end{tikzcd} } B_s \ra 0,
\end{align*}
with both $B_s$ in cohomological degree $0.$
The \emph{$2$-braid group} 2-$\cB r$ of type $B_n$ is the full monoidal subcategory of $\Kom^b(\cK ar(\overline{\cD}))$ generated by $F_s$ and $E_s$ for all $s \in S$.
Observe that the set of isomorphism classes of objects in $2$-$\cB r,$ denoted by Pic$(2$-$\cB r)$, forms a group called the \textit{Picard group} of the monoidal category $2$-$\cB r$  under tensor product composition.
Rouquier showed that $F_s$ and $E_s$ are inverses of each other, and moreover they satisfy the required braid relations \cite[Proposition 9.2, Lemma 9.3]{rouquier_2006}.
In particular, the map sending $\sigma_j$ to the isomorphism class $F_{s_j} \in $ Pic$(2$-$\cB r)$ for all $j \geq 1$ is well-defined \cite[Proposition 9.4]{rouquier_2006}.
Rouquier conjectures that this assignment is moreover faithful in general, and we shall prove this in the type $B$ case.

\section{Quotient category and proof of faithfulness}\label{quotient and proof}
The aim of this section is to show that $\cK ar(\ol{\cB})$ is a quotient category of $\cK ar(\ol{\cD})$.
This will be given by an essentially surjective monoidal functor $G_0: \cD \ra \cB$, which leads to an essentially surjective additive, monoidal functor on the Karoubi envelope of their respective additive closures.
For clarity, we will use \textcolor{red}{red} or \textcolor{violet}{violet} for a simple reflection $j$, \textcolor{blue}{blue} for an adjacent simple reflection $j \pm 1,$ and \textcolor{green!100}{green} for a distant simple reflection $k$ with $|j-k|>1.$
    
We shall define $G_0$ on generating objects by sending $j$ in $S$ to $\cU_j$ and on generating morphisms as in the proof of \cref{maintheorem}, where the five types of $(\Ba_n, \Ba_n)$-bimodule homomorphisms needed are defined as follows:
\begin{itemize}
    \item $(-1)^{j+1} \alpha_j:  \cU_j \ra \cU_j(-1) \oplus \cU_j(1) \xra{\cong} \cU_j \otimes_{\Ba_n} \cU_j $ 
    is a morphism of degree (-1) defined by
    \[
    e_j \otimes e_j \mapsto (0, e_j \otimes e_j) \mapsto e_j \otimes e_j \otimes_{\Ba_n} e_j \otimes e_j,
    \]
    \item $\delta_j: \cU_j \otimes_{\Ba_n} \cU_j \xra{\cong} \cU_j(-1) \oplus \cU_j(1) \ra \cU_j$ 
    is a morphism  of degree (-1) defined by
    \begin{alignat*}{4}
        & e_j \otimes X_j \otimes_{\Ba_n} e_j \otimes e_j \
        && \mapsto \ 
        && (e_j \otimes e_j,0 ) \mapsto e_j \otimes e_j; \\
        & e_j \otimes e_j \otimes_{\Ba_n} e_j \otimes e_j  \
        && \mapsto \ 
        && (0, e_j \otimes e_j) \mapsto 0,
    \end{alignat*}
    \item $\epsilon_j : \Ba_n \ra \Ba_n$ is a morphism of degree (2) defined by
    \[
    1 \mapsto
    \begin{cases}
    (-1)^{i+1} (2X_j + 2X_{j+1}), &\text{for } j=1;\\
    (-1)^{i+1} (2X_j + X_{j-1} + X_{j+1}), &\text{for } 1<j < n; \\
    (-1)^{i+1} ( 2X_j + X_{j-1}), &\text{for } j = n,
    \end{cases}
    \]
    \item $(-1)^{j+1}\beta_j : \cU_j \ra \Ba_n$ is a morphism of degree (1) defined by
    \[
    e_j \otimes e_j \mapsto e_j,
    \]
    \item $\gamma_j  :  \Ba_n \ra  \cU_j$  is a morphism of degree (1) defined by
    \[
    1 \mapsto
        \begin{cases}
        X_j \otimes e_j + e_j \otimes X_j + (j+1|j) \otimes (j|j+1) \\
        \hspace{8mm} + (-ie_{j+1})(j+1|j) \otimes (j|j+1)(ie_{j+1}), &\text{for } j=1;\\
        X_j \otimes e_j + e_j \otimes X_j + (j-1|j) \otimes (j|j-1) + (j+1|j) \otimes (j|j+1), &\text{for } 1<j < n; \\
        X_j \otimes e_j + e_j \otimes X_j + (j-1|j) \otimes (j|j-1), &\text{for } j = n.
        \end{cases}
    \]
\end{itemize}
Note that the last two maps $\beta_j$ and $\gamma_j$ are exactly the bimodule maps in \cref{beta and gamma maps}. 

The following lemma will be crucial:
\begin{lemma}[{\cite[Lemma 3.11]{heng_nge}}] \label{P_i biadjoint}
We have the adjoint pairs $(P_j^B \otimes_{\mathbb{K}_j} - , \ {}_{j}{P}^B \otimes_{\Ba_n} -)$ and $( {}_{j}{P}^B (2) \otimes_{\Ba_n} - , \ P_j^B \otimes_{\mathbb{K}_j} - )$.
\end{lemma}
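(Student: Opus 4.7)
The plan is to establish each adjunction by writing down explicit units and counits and verifying the triangle identities directly. The first pair is essentially the standard tensor--Hom adjunction combined with projectivity of $P^B_j$; the second encodes the fact that the corner subalgebra $e_j\Ba_n e_j$ is a graded Frobenius algebra over $\mathbb{K}_j$ (with top-degree element $X_j$ in degree $2$), and is built out of the bimodule maps $\beta_j$ and $\gamma_j$ recalled in \cref{beta and gamma maps}.

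For the first adjunction, the key observation is that $P^B_j = \Ba_n e_j$ is finitely generated and projective as a left $\Ba_n$-module, so that
\[\Hom_{\Ba_n}(\Ba_n e_j,\ N) \cong e_j N \cong {}_jP^B \otimes_{\Ba_n} N\]
functorially in $N$. Composing with the tensor--Hom adjunction over $\mathbb{K}_j$ then yields the natural isomorphism
\[\Hom(P^B_j \otimes_{\mathbb{K}_j} M,\ N) \cong \Hom(M,\ {}_jP^B \otimes_{\Ba_n} N).\]
Equivalently, one takes the counit to be $\beta_j$ tensored with the identity on $N$, and the unit to be the map $m \mapsto (e_j \otimes_{\Ba_n} e_j) \otimes m$ corresponding to the degree-zero summand of ${}_jP^B \otimes_{\Ba_n} P^B_j$; both triangle identities then collapse to the trivial fact $\beta_j(e_j \otimes e_j) = e_j$.

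For the second adjunction the grading shift is essential, and one must use $\gamma_j$ and $\beta_j$ as the unit and counit respectively. Concretely, the unit $N \to P^B_j \otimes_{\mathbb{K}_j} \bigl({}_jP^B(2) \otimes_{\Ba_n} N\bigr)$ is induced by $\gamma_j : \Ba_n \to P^B_j \otimes_{\mathbb{K}_j} {}_jP^B(2)$ via tensoring with $N$, while the counit ${}_jP^B(2) \otimes_{\Ba_n} \bigl(P^B_j \otimes_{\mathbb{K}_j} M\bigr) \to M$ is the degree-zero map induced from the multiplication $\beta_j$ after the shift $(2)$ is placed on the left factor (picking out the $X_j$-component of the corner algebra). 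The triangle identities then reduce to explicit computations of the shape $(\beta_j \otimes \id) \circ (\id \otimes \gamma_j) = \id$, which follow directly from the explicit formulas for $\beta_j$ and $\gamma_j$ together with the zigzag relations $(j|j-1)(j-1|j) = (j|j+1)(j+1|j) = X_j$.

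The main obstacle will be the boundary case $j=1$, where $\mathbb{K}_1 = \R$ rather than $\C$ and vertex $1$ carries no imaginary loop. The formula for $\gamma_1(1)$ accordingly contains the extra ``imaginary-twisted'' summand $(-ie_2)(2|1) \otimes (1|2)(ie_2)$, and verifying the triangle identities here requires careful bookkeeping of the interaction between the $\R$- and $\C$-actions on the tensor factors. The relation $(1|2)(ie_2)(2|1) = 0$, together with $(ie_2)^2 = -e_2$, should ensure that the extra term contributes exactly what is needed for the triangle identities to close up. For $j \geq 2$ the argument is uniform and closely parallels the analogous biadjointness verified in the type-$A$ zigzag setting of \cite{KhoSei}.
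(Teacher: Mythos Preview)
The paper does not prove this lemma; it is merely quoted from the authors' earlier paper \cite{heng_nge}, so there is no proof in the present paper to compare against. Your outline is the standard and correct way to establish such biadjointness (tensor--Hom for the first pair, Frobenius-type trace pairing for the second), and it would succeed.

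One point needs tightening. For the second adjunction $({}_jP^B(2)\otimes_{\Ba_n}-,\ P_j^B\otimes_{\mathbb{K}_j}-)$, the counit is a $(\mathbb{K}_j,\mathbb{K}_j)$-bimodule map
\[
{}_jP^B(2)\otimes_{\Ba_n}P_j^B \;\cong\; e_j\Ba_n e_j(2)\;\longrightarrow\;\mathbb{K}_j,
\]
namely the Frobenius trace sending $X_j\mapsto 1$ and $e_j\mapsto 0$. This is \emph{not} the map $\beta_j$ of \cref{beta and gamma maps}, which goes $P_j^B\otimes_{\mathbb{K}_j}{}_jP^B\to\Ba_n$ and has the tensor factors in the opposite order and a different target. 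Your parenthetical ``picking out the $X_j$-component of the corner algebra'' is the correct description; just do not call it $\beta_j$. Correspondingly, the triangle identity you should be checking for this adjunction is $(\mathrm{tr}\otimes\id)\circ(\id\otimes\gamma_j)=\id$ on ${}_jP^B(2)$ (and its partner on $P_j^B$), not $(\beta_j\otimes\id)\circ(\id\otimes\gamma_j)$. Once this is corrected, your verification via the explicit formula for $\gamma_j$ and the zigzag relations goes through exactly as you say, including the $j=1$ boundary case where the extra imaginary-twisted summand is handled by $(1|2)(ie_2)(2|1)=0$ and $(ie_2)^2=-e_2$.
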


\begin{proposition} Denote $\mathbb{K}_j := \R$ when $j = 1$ and $\mathbb{K}_j := \C$ when $j \geq 2$.
We have the following identification of grading preserving $(\Ba_n, \Ba_n)$-bimodules morphism spaces:
\begin{enumerate}
\item $\cU_j \ra \Ba_n$ of degree $(1)$ is isomorphic to $\mathbb{K}_j \beta_j;$ 
    \label{eqn:bimodmapUB}
\item $\Ba_n \ra \cU_j$ of degree $(1)$ is isomorphic to $\mathbb{K}_j \gamma_j;$
\item $\cU_j \ra \cU_j \otimes_{\Ba_n} \cU_j $ of degree $(-1)$ is isomorphic to $\mathbb{K}_j \alpha_j;$
\item $\cU_j \otimes_{\Ba_n} \cU_j \ra \cU_j $ of degree $(-1)$ is isomorphic to $\mathbb{K}_j \delta_j;$
    \label{eqn:bimodmapUUU}
\item $\Ba_n \ra \Ba_n$ of degree $(2)$ is isomorphic to $\oplus_{1 \leq j \leq n} \mathbb{K}_jX_j,$ where $X_j$ is interpreted with left multiplication (equivalently right multiplication) by $X_j$. 
    \label{eqn:bimodmapBB}
\end{enumerate}
\end{proposition}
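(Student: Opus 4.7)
The plan is to reduce each Hom space to a computation involving endomorphisms of ${}_j P^B$, using the biadjunction from \cref{P_i biadjoint} and the tensor identifications in \cref{iPj identification}; the final result is then read off from the explicit basis of $\Ba_n$ given in \cref{B zigzag}.

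For (\ref{eqn:bimodmapUB}) and (2), the biadjunction gives, for any $(\Ba_n, \Ba_n)$-bimodule $N$,
\[
\Hom_{(\Ba_n,\Ba_n)}\bigl(P_j^B \otimes_{\mathbb{K}_j} {}_j P^B, N\bigr) \cong \Hom_{(\mathbb{K}_j,\Ba_n)}\bigl({}_j P^B,\ {}_j P^B \otimes_{\Ba_n} N\bigr),
\]
and an analogous formula on the other side involving ${}_j P^B(2)$. Setting $N = \Ba_n$ and tracking the shift in $\cU_j = P_j^B \otimes_{\mathbb{K}_j} {}_j P^B(1)$, both Hom spaces of the stated degrees reduce to $\mathrm{End}_{(\mathbb{K}_j, \Ba_n)}({}_j P^B)$ in degree $0$. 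A $(\mathbb{K}_j, \Ba_n)$-bimodule endomorphism of ${}_j P^B$ is determined by $\phi(e_j) \in e_j \Ba_n e_j$ commuting with $ie_j$ when $j \geq 2$; inspection of the basis shows the degree-$0$ piece is exactly $\mathbb{K}_j$, and the identity endomorphism corresponds to $\beta_j$ via the counit and to $\gamma_j$ via the unit of the biadjunction.

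For (3) and (\ref{eqn:bimodmapUUU}), the isomorphism $\cU_j \otimes_{\Ba_n} \cU_j \cong \cU_j(1) \oplus \cU_j(-1)$ from \cref{prop: potential categorification TL(B)} decomposes both Hom spaces into copies of $\mathrm{End}(\cU_j)$ in shifted degrees. Applying the biadjunction together with the identification ${}_j P^B \otimes_{\Ba_n} P_j^B \cong \mathbb{K}_j \oplus \mathbb{K}_j(-2)$ from \cref{iPj identification} gives
\[
\mathrm{End}_{(\Ba_n,\Ba_n)}(\cU_j) \cong \mathrm{End}_{(\mathbb{K}_j,\Ba_n)}({}_j P^B) \oplus \mathrm{End}_{(\mathbb{K}_j,\Ba_n)}({}_j P^B)(-2);
\]
the degree $0$ part is $\mathbb{K}_j$ (the identity) while the degree $-2$ part vanishes since ${}_j P^B$ sits in non-negative degrees. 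Hence both spaces are $\mathbb{K}_j$, spanned respectively by $\alpha_j$ and $\delta_j$.

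For (\ref{eqn:bimodmapBB}), bimodule endomorphisms of $\Ba_n$ correspond to the bimodule centre $Z(\Ba_n)$ via $\phi \mapsto \phi(1)$. A degree-$2$ central element decomposes as $z = \sum_j z_j$ with $z_j \in e_j \Ba_n e_j$, and centrality imposes $(k|k{\pm}1) \, z_{k\pm 1} = z_k \, (k|k{\pm}1)$ for every connecting path. The zigzag relations $(j+1|j)(j|j-1) = 0 = (j-1|j)(j|j+1)$ force both sides of each such constraint to vanish, so the $z_j$ are independent degree-$2$ elements of $e_j \Ba_n e_j$, which the basis identifies with $\mathbb{K}_j X_j$. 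The main technical point throughout is the bookkeeping of grading shifts through the adjunctions; once these are aligned, the computations reduce to inspection of the basis of $\Ba_n$.
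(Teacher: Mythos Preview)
Your proof is correct and follows essentially the same strategy as the paper: reduce each Hom space via the adjunction of \cref{P_i biadjoint} together with the identification ${}_jP^B \otimes_{\Ba_n} P_j^B \cong \mathbb{K}_j \oplus \mathbb{K}_j(-2)$ from \cref{iPj identification}, and then read off dimensions from the basis of $e_j\Ba_n e_j$. The only cosmetic differences are that for (3)--(4) you first invoke the splitting $\cU_j\otimes_{\Ba_n}\cU_j\cong \cU_j(1)\oplus\cU_j(-1)$ and then compute $\mathrm{End}(\cU_j)$, whereas the paper applies the adjunction directly to $\cU_j\otimes_{\Ba_n}\cU_j$; and for (5) you spell out the centre computation (using the zigzag relations to see that the $X_j$ components are unconstrained), while the paper simply notes that the image of $1$ determines the map.
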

\begin{proof}
\eqref{eqn:bimodmapBB} follows directly from the fact that the map is completely determined by the image of $1 \in \Ba_n$ together with the degree restriction.

The morphisms $\beta_j, \gamma_j, \alpha_j$ and $\delta_j$ in \eqref{eqn:bimodmapUB} -- \eqref{eqn:bimodmapUUU} are indeed non-trivial $(\Ba_n,\Ba_n)$-bimodule morphisms of the respective spaces; it is therefore sufficient to show that hom spaces are all one-dimensional.
This follows from an easy computation using the adjoint pair $(P_j^B \otimes_{\mathbb{K}_j} - , \ {}_{j}{P}^B \otimes_{\Ba_n} -)$ in \cref{P_i biadjoint}.
For example, \eqref{eqn:bimodmapUUU} follows from the following identification of hom spaces:
\begin{align*}
    \Hom_{\Ba_n\text{-bimod}}(\cU_j \otimes_{\Ba_n} \cU_j, \cU_j(-1)) 
        &= \Hom_{\Ba_n\text{-bimod}}(P_j \otimes_{\mathbb{K}_j} {}_jP \otimes_{\Ba_n} P_j \otimes_{\mathbb{K}_j} {}_jP(2), P_j \otimes_{\mathbb{K}_j} {}_jP) \\
        &\cong \Hom_{\text{mod-}\Ba_n}( {}_jP \otimes_{\Ba_n} P_j \otimes_{\mathbb{K}_j} {}_jP(2), {}_jP \otimes_{\Ba_n} P_j \otimes_{\mathbb{K}_j} {}_jP) \\
        &\cong \Hom_{\text{mod-}\Ba_n}\left( {}_jP(2) \oplus {}_jP, {}_jP \oplus {}_jP(-2) \right).
\end{align*}
Since $\Hom_{\text{mod-}\Ba_n}\left( {}_jP, {}_jP (k)\right) \cong \mathbb{K}_j$ if and only if $k = 0$ or $2$, we have that 
\[
\Hom_{\Ba_n\text{-bimod}}(\cU_j \otimes_{\Ba_n} \cU_j, \cU_j(-1)) \cong \mathbb{K}_j
\]
as required.
We leave the other cases to the reader.
\end{proof}

\begin{theorem} \label{maintheorem}
There is an essentially surjective monoidal functor $G_0 : \cD \ra \cB$ which sends:
\begin{enumerate}[(i)]
\item the empty sequence $\emptyset$ in $S$ to  $\Ba_n,$ the monoidal identity in $\cB;$
\item $s_j$ in $S$ to $\cU_j;$ and
\item a sequence $s_{i_1} s_{i_2} \cdots s_{i_k}$ in $S$ of length $k \geq 2$ to $G_0( i_1 i_2 \cdots i_{k-1}) \otimes_{\Ba_n} G_0(i_k).$
\end{enumerate}
\end{theorem}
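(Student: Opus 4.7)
The plan is to first extend the given definition of $G_0$ on objects to generating morphisms of $\cD$ and then verify that each defining relation of $\cD$ becomes a valid identity in $\cB$. The essential surjectivity claim is automatic from the definition on objects, since $\cB$ is by construction the monoidal category generated under $-\otimes_{\Ba_n}-$ by the $\cU_j$ together with the monoidal unit $\Ba_n$, which are precisely the images of the generating objects of $\cD$.

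On generating morphisms I would set: the startdot (of degree $(1)$, $\emptyset \to s_j$) equal to $\gamma_j$; the enddot ($s_j \to \emptyset$) equal to $(-1)^{j+1}\beta_j$; the upward trivalent vertex ($s_j \to s_j s_j$) equal to $(-1)^{j+1}\alpha_j$; the downward trivalent vertex ($s_j s_j \to s_j$) equal to $\delta_j$; and a box decorated by $\alpha_{s_j}$ equal to the bimodule endomorphism $\epsilon_j$ of $\Ba_n$ listed above. Each $2m_{st}$-valent vertex is a degree $0$ morphism, and its image is determined (up to a scalar that I would pin down by a direct computation) by the relevant homogeneous hom space after applying the identifications from \cref{iPj identification} and \cref{prop: potential categorification TL(B)}. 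The signs $(-1)^{j+1}$ on $\alpha_j$ and $\beta_j$ are inserted so that all of the sign-sensitive two- and three-colour relations below come out correctly across all vertices simultaneously.

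Once $G_0$ is pinned down on generators, the verification proceeds block by block in order of increasing complexity. The polynomial relations \eqref{barbell} and \eqref{polyforce} reduce to identities in $\Ba_n$ after expanding $\beta_j$, $\gamma_j$, and $\epsilon_j$, using that the Demazure operators $\partial_{s_j}$ output precisely the Cartan entries $a_{s_j,s_k}$ built into our chosen realisation. The one-colour relations \eqref{needle}, \eqref{Frobenius} and \eqref{wall} follow from the biadjunction in \cref{P_i biadjoint} and the fact that $(\cU_j;\,\beta_j,\gamma_j,\alpha_j,\delta_j)$ realises a Frobenius-like algebra structure in $(\Ba_n,\Ba_n)$-bimod. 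The two-colour associativity relations \eqref{assoc3}, \eqref{assoc4} and the dot-crossing relations \eqref{mst2}--\eqref{mst4} reduce, via repeated application of \cref{iPj identification} and \cref{prop: potential categorification TL(B)}, to scalar comparisons in one-dimensional homogeneous hom spaces, with the Jones--Wenzl coefficients in \eqref{mst3} and \eqref{mst4} providing exactly the scalars needed to balance these comparisons. For colours avoiding vertex $1$ the checks match the type $A_{n-1}$ verification of \cite{jensen_master} after base change from $\R$ to $\C$ and can be imported directly.

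The main obstacle will be the three-colour Zamolodzhikov relations, and in particular the $B_2 \times A_1$ relation \eqref{Zamo4} and the $B_3$ relation \eqref{B3}, which are the only genuinely new ones (the $A_1^3$, $A_2 \times A_1$, and $A_3$ cases for colours away from vertex $1$ descend from \cite{jensen_master}). The strategy in each case is to expand both sides as compositions of the generating bimodule maps above, collapse every internal tensor product using \cref{iPj identification} and \cref{prop: potential categorification TL(B)}, and then compare the two resulting endomorphisms of an explicit indecomposable summand. All intermediate hom spaces are either trivial or one-dimensional in the relevant degree, so the comparison reduces to a finite sign and scalar calculation; the signs $(-1)^{j+1}$ built into the definition of $G_0$ are precisely what makes these calculations agree on the nose. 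This is essentially a bookkeeping exercise, but a lengthy one, and constitutes the bulk of the work of \cref{quotient and proof}.
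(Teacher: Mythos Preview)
Your setup and choice of coefficients on the dots and trivalents match the paper exactly. The key point you are missing is that every $2m_{st}$-valent vertex is sent to \emph{zero}: for $m_{st}=2$ this is forced since $\cU_j\otimes_{\Ba_n}\cU_k=0$ when $|j-k|>1$, and for $m_{st}=3,4$ a short adjunction computation via \cref{P_i biadjoint} and \cref{iPj identification} shows that the relevant degree-$0$ bimodule hom spaces actually vanish (after reduction one is looking for negative-degree elements of some ${}_kP$). So the scalar you plan to ``pin down'' is $0$, not a point in a one-dimensional space. Consequently the two-colour associativity relations \eqref{assoc3}, \eqref{assoc4} and \emph{all} of the Zamolodzhikov relations \eqref{Zamo4}--\eqref{B3} are trivially satisfied, since every diagram appearing in them contains a $2m_{st}$-valent vertex. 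What you flag as ``the main obstacle'' and ``the bulk of the work'' is therefore automatic; there is no lengthy bookkeeping on \eqref{B3} at all.

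The genuine content lies instead in the dot-crossing relations \eqref{mst3} and especially \eqref{mst4}. There the left-hand side vanishes (it contains a $2m_{st}$-valent vertex) while the right-hand side is a Jones--Wenzl combination built only from dots and trivalents, so one must verify that this combination also maps to zero under $G_0$. The paper does this by evaluating each term on an explicit basis of $\cU_1\otimes_{\Ba_n}\cU_2\otimes_{\Ba_n}\cU_1\otimes_{\Ba_n}\cU_2$ and comparing coefficients; together with the barbell \eqref{barbell} and polynomial-forcing \eqref{polyforce} constraints, this produces the system of equations whose solution fixes the signs $(-1)^{j+1}$ and the constants $f^j_k$.
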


\begin{proof}
The assignment of $G_0 : \cD \ra \cB$ on objects is done as in the statement of the theorem, whereas on generating morphisms it is given in \cref{fig: quotient functor on morphism}.

\begin{figure}
\begin{tabular}{c c l}
\begin{tikzcd}
\draw[dashed,color=black!60] (0,0) circle (0.5);
\draw[ violet, line width=0.8mm] (0,0) -- (0,-.5);
\filldraw[violet] (0,0) circle (3pt) ;
\end{tikzcd} 
& $\longmapsto$ 
& $b_j \beta_j:\cU_j \ra \Ba_n   \text{ of degree 1}$; \\
\\
\begin{tikzcd}
\draw[dashed,color=black!60] (0,0) circle (0.5);
\draw[ violet, line width=0.8mm] (0,0) -- (0,.5);
\filldraw[violet] (0,0) circle (3pt) ;
\end{tikzcd}
& $\longmapsto$
& $ c_j \gamma_j:\Ba_n \ra \cU_j   \text{ of degree 1}$;
\\
\\
\begin{tikzcd}
\draw[dashed,color=black!60] (0,0) circle (0.5);
\draw[ violet, line width=0.8mm] (0,0) -- (0,-.5);
\draw[ violet, line width=0.8mm] (0,0) -- (-.35,.35);
\draw[ violet, line width=0.8mm] (0,0) -- (.35,.35);
\end{tikzcd}
&$\longmapsto$
& $a_j \alpha_j:\cU_j \ra \cU_j \otimes_{\Ba_n} \cU_j  \text{ of degree -1}$;
\\
\\
\begin{tikzcd}
\draw[dashed,color=black!60] (0,0) circle (0.5);
\draw[ violet, line width=0.8mm] (0,0) -- (0,.5);
\draw[ violet, line width=0.8mm] (0,0) -- (-.35,-.35);
\draw[ violet, line width=0.8mm] (0,0) -- (.35,-.35);
\end{tikzcd}
&$\longmapsto$
&$d_j \delta_j: \cU_j \otimes_{\Ba_n} \cU_j \ra \cU_j  \text{ of degree -1}$;
\\
\\
\begin{tikzcd}
\draw[dashed,color=black!60] (0,0) node{\textcolor{black}{\alpha}_{\textcolor{violet}{s_i}}} circle (0.5);
\end{tikzcd}
&$\longmapsto$
&$\epsilon_j:\Ba_n \ra \Ba_n, 1 \mapsto \sum\limits_{k=1}^n f^j_k X_k  \text{ of degree 2}$;
\\
\\
\begin{tikzcd}
\draw[dashed,color=black!60] (0,0) circle (0.5);

\draw[ violet, line width=0.8mm] (.35,.35) -- (-.35,-.35);
\draw[ green, line width=0.8mm] (-.35,.35) -- (.35,-.35);
\end{tikzcd}
&$\longmapsto$
&$0$ \text{  as $\cU_j \otimes_{\Ba_n} \cU_k = 0$, for $|j-k| > 1$};
\\
\\
\begin{tikzcd}
\draw[dashed,color=black!60] (0,0) circle (0.5);
\draw[ red, line width=0.8mm] (0,0) -- (0,.5);
\draw[ red, line width=0.8mm] (0,0) -- (-.35,-.35);
\draw[ red, line width=0.8mm] (0,0) -- (.35,-.35);
\draw[ blue, line width=0.8mm] (0,0) -- (0,-.5);
\draw[ blue, line width=0.8mm] (0,0) -- (-.35,.35);
\draw[ blue, line width=0.8mm] (0,0) -- (.35,.35);
\end{tikzcd}
&$\longmapsto$
& $0$;
\\
\\
\begin{tikzcd}
\draw[dashed,color=black!60] (0,0) circle (0.5);
\draw[ red, line width=0.8mm] (0,0) -- (0,0.5);
\draw[ red, line width=0.8mm] (0,0) -- (0.5,0);
\draw[ red, line width=0.8mm] (0,0) -- (0,-.5);
\draw[ red, line width=0.8mm] (0,0) -- (-0.5,0);

\draw[ blue, line width=0.8mm] (.35,.35) -- (-.35,-.35);
\draw[ blue, line width=0.8mm] (-.35,.35) -- (.35,-.35);
\end{tikzcd}
&$\longmapsto$
& $0$.
\end{tabular}
\caption{{\small The assignment of $G_0: \cD \ra \cB$ on generating morphisms. Note that $a_j,$ $b_j,$ $c_j,$ $d_j,$ $f^j_k$ $\in \R$ for $j = 1$ and $\in \C$ for $j > 1.$ }}
\label{fig: quotient functor on morphism}
\end{figure}
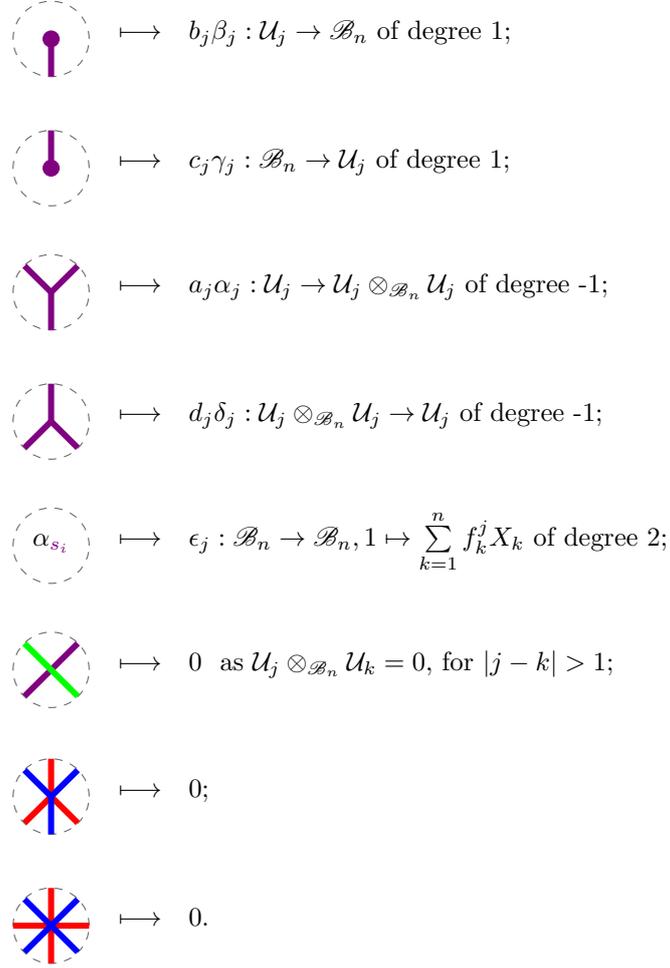

We want to find a set of scalars such that the restrictions imposed by the relations in $\cD$ are satisfied.
To illustrate, let us consider the barbell relation \eqref{barbell} for $s_1$:

\begin{equation*}
 \centering
\begin{tikzpicture}[scale=0.7]
\draw[dashed,color=black!60] (0,0) circle (1.0);
\draw[dashed,color=black!60] (3,0) circle (1.0);
\draw[ violet, line width=0.8mm] (0,.55) -- (0,-.55);
\filldraw[violet] (0,.55) circle (3pt) ;
\filldraw[violet] (0,-.55) circle (3pt) ;
\node at (1.5,0) {=};
\node at (3,0) {{\LARGE $\alpha_{\textcolor{violet}{s_1}}$}};
\end{tikzpicture}. 
\end{equation*}
By definition,
\begin{align*}
1 & \mapsto c_1 \left( X_1 \otimes e_1 + e_1 \otimes X_1 + (2|1) \otimes (1|2) + (-ie_2)(2|1) \otimes (1|2)(ie_2) \right)\\
& \mapsto c_1b_1 \left(  X_1 + X_1 + X_2 + X_2 \right) \\
& = c_1b_1 \left( 2X_1 + 2X_2 \right).
\end{align*}
\noindent Equating with the right hand side, we get $c_1b_1 \left( 2X_1 + 2X_2 \right) = \sum^n_{k=1} f^1_k X_k$ which, in turn, implies $2c_1 b_1 = f^1_1 $ and $2c_1 b_1 = f^1_2.$

  By the same token, let us look at the type B 8-valences relation \eqref{mst4}: suppose \textcolor{red}{red} (left-most strand) encodes \textcolor{red}{$s_1$} and \textcolor{blue}{blue} encodes \textcolor{blue}{$s_2$},
\begin{align*}
\begin{tikzcd}
\draw[color=red, line width=0.8mm] (-1.45,0) -- (.55,0);
\draw[color=red, line width=0.8mm] (0,0.75) -- (0,-0.75);
\filldraw[red]  (0.55,0) circle (3pt) ;
\draw[color=blue, line width=0.8mm] (.75,.75) -- (-.75,-0.75);
\draw[color=blue, line width=0.8mm] (-.75,.75) -- (.75,-0.75);
\end{tikzcd} 
 &= 
\begin{tikzcd}
\draw[red, line width=0.8mm] (.125,.68) -- (.125,-.68);
\draw[blue, line width=0.8mm] (-.125,.68) -- (-.125,-.68);
\draw[red, line width=0.8mm] (-.5,0) -- (-1,0);
\draw[blue, line width=0.8mm] (.45,0) -- (.7,0);
\draw[blue, line width=0.8mm] (.7,.68) -- (.7,-.68);
\filldraw[red] (-.425,0) circle (3pt) ;
\filldraw[blue] (.425,0) circle (3pt) ;
\end{tikzcd} 
- \frac{a_{s_1,s_2}}{a_{s_2,s_1}a_{s_1,s_2}-1} 
\begin{tikzcd}
\draw[red, line width=0.8mm] (-.5,0) -- (-1,0);
\draw[blue, line width=0.8mm] (.7,.68) -- (.7,-.68);
\draw[red, line width=0.8mm] (.23,0.4) -- (.23,0.66);
\draw[red, line width=0.8mm] (.23,-0.4) -- (.23,-0.66);
\draw[blue, line width=0.8mm] (0,0) -- (-.45,.58);
\draw[blue, line width=0.8mm] (0,0) -- (-.45,-.58);
\draw[blue, line width=0.8mm] (0,0) -- (.7,0);
\filldraw[red] (-.4,0) circle (3pt) ;
\filldraw[red] (.23,0.3) circle (3pt) ;
\filldraw[red] (.23,-0.3) circle (3pt) ;
\end{tikzcd} 
- \frac{a_{s_2,s_1}}{a_{s_2,s_1}a_{s_1,s_2}-1} 
\begin{tikzcd}
\draw[blue, line width=0.8mm] (.5,0) -- (.7,0);
\draw[blue, line width=0.8mm] (.7,.6) -- (.7,-.6);
\draw[blue, line width=0.8mm] (-.23,0.4) -- (-.23,0.66);
\draw[blue, line width=0.8mm] (-.23,-0.4) -- (-.23,-0.66);
\draw[red, line width=0.8mm] (0,0) -- (.45,.58);
\draw[red, line width=0.8mm] (0,0) -- (.45,-.58);
\draw[red, line width=0.8mm] (0,0) -- (-1,0);
\filldraw[blue] (.4,0) circle (3pt) ;
\filldraw[blue] (-.23,0.3) circle (3pt) ;
\filldraw[blue] (-.23,-0.3) circle (3pt) ;
\end{tikzcd}  \\
&+ \frac{1}{a_{s_2,s_1}a_{s_1,s_2}-1} 
\begin{tikzcd}
\draw[blue, line width=0.8mm] (.13,0) -- (.7,0);
\draw[blue, line width=0.8mm] (.7,.6) -- (.7,-.6);
\draw[blue, line width=0.8mm] (.13,0) -- (-.45,.58);
\draw[red, line width=0.8mm] (-.15,0) -- (.45,-.58);
\draw[red, line width=0.8mm] (-.15,0) -- (-1,0);
\draw[red, line width=0.8mm] (.23,0.4) -- (.25,0.66);
\draw[blue, line width=0.8mm] (-.23,-0.4) -- (-.25,-0.66);
\filldraw[red] (.23,0.3) circle (3pt) ;
\filldraw[blue] (-.23,-0.3) circle (3pt) ;
\end{tikzcd} 
+ \frac{1}{a_{s_2,s_1}a_{s_1,s_2}-1}
\begin{tikzcd}
\draw[blue, line width=0.8mm] (.13,0) -- (.7,0);
\draw[blue, line width=0.8mm] (.7,.68) -- (.7,-.68);
\draw[red, line width=0.8mm] (.23,-0.4) -- (.23,-0.66);
\draw[blue, line width=0.8mm] (-.23,0.4) -- (-.23,0.66);
\draw[red, line width=0.8mm] (-.13,0) -- (.45,.58);
\draw[red, line width=0.8mm] (-.13,0) -- (-1,0);
\draw[blue, line width=0.8mm] (.13,0) -- (-.45,-.58);
\filldraw[red] (.23,-0.3) circle (3pt) ;
\filldraw[blue] (-.23,0.3) circle (3pt) ;
\end{tikzcd} \\	
\end{align*}  
which, by definition of the functor, is equivalent to checking the equation
\begin{align*}
0 
\  &= \ 
\begin{tikzcd}
\draw[red, line width=0.8mm] (.125,.68) -- (.125,-.68);
\draw[blue, line width=0.8mm] (-.125,.68) -- (-.125,-.68);
\draw[red, line width=0.8mm] (-.425,0) -- (-.425,-0.68);
\draw[blue, line width=0.8mm] (.45,.68) -- (.45,-.68);
\filldraw[red] (-.425,0) circle (3pt) ;
\end{tikzcd} 
 \ - \frac{a_{s_1,s_2}}{a_{s_2,s_1}a_{s_1,s_2}-1}  \  \
\begin{tikzcd}
\draw[red, line width=0.8mm] (-.4,0) -- (-.4,-.68);
\draw[blue, line width=0.8mm] (.7,.68) -- (.7,-.68);
\draw[red, line width=0.8mm] (.3,0.4) -- (.3,0.68);
\draw[red, line width=0.8mm] (.3,-0.4) -- (.3,-0.68);
\draw[blue, line width=0.8mm] (-0.05,0) -- (-.05,.68);
\draw[blue, line width=0.8mm] (-0.05,0) -- (-.05,-.68);
\draw[blue, line width=0.8mm] (0,0) -- (.7,0);
\filldraw[red] (-.4,0) circle (3pt) ;
\filldraw[red] (.3,0.3) circle (3pt) ;
\filldraw[red] (.3,-0.3) circle (3pt) ;
\end{tikzcd} 
 \ - \frac{a_{s_2,s_1}}{a_{s_2,s_1}a_{s_1,s_2}-1} \
\begin{tikzcd}
\draw[blue, line width=0.8mm] (.5,.68) -- (.5,-.68);
\draw[blue, line width=0.8mm] (-.23,0.4) -- (-.23,0.68);
\draw[blue, line width=0.8mm] (-.23,-0.4) -- (-.23,-0.68);
\draw[red, line width=0.8mm] (0.15,0) -- (0.15,.68);
\draw[red, line width=0.8mm] (0.15,0) -- (0.15,-.68);
\draw[red, line width=0.8mm] (0.15,0) .. controls (-.5,-.05) and (-.58,-.1) .. (-.6,-0.68);
\filldraw[blue] (-.23,0.3) circle (3pt) ;
\filldraw[blue] (-.23,-0.3) circle (3pt) ;
\end{tikzcd}  \\
 & \ + \frac{1}{a_{s_2,s_1}a_{s_1,s_2}-1} \ 
\begin{tikzcd}
\draw[blue, line width=0.8mm] (.7,0) .. controls (.05, .05) and (-.07,.1) .. (-.05,.68);
\draw[blue, line width=0.8mm] (.7,.68) -- (.7,-.68);
\draw[red, line width=0.8mm] (-.05,0) .. controls (0,-.009) and (.3,-.08) .. (.35,-.68);
\draw[red, line width=0.8mm] (-.05,0) .. controls (-0.1,-.009) and (-.35,-.08) .. (-.4,-.68);
\draw[red, line width=0.8mm] (.35,0.4) -- (.35,0.66);
\draw[blue, line width=0.8mm] (-.05,-0.4) -- (-.05,-0.66);
\filldraw[red] (.35,0.3) circle (3pt) ;
\filldraw[blue] (-.05,-0.3) circle (3pt) ;
\end{tikzcd} 
\ + \frac{1}{a_{s_2,s_1}a_{s_1,s_2}-1} \ 
\begin{tikzcd}
\draw[blue, line width=0.8mm] (.7,0) .. controls (.05, -.05) and (-.07,-.1) .. (-.05,-.68);
\draw[red, line width=0.8mm] (-.05,0) .. controls (0,.009) and (.3,.08) .. (.35,.68);
\draw[red, line width=0.8mm] (-.05,0) .. controls (-0.1,-.009) and (-.35,-.08) .. (-.4,-.68);
\draw[blue, line width=0.8mm] (.7,.68) -- (.7,-.68);
\draw[red, line width=0.8mm] (.35,-0.4) -- (.35,-0.66);
\draw[blue, line width=0.8mm] (-.05,0.4) -- (-.05,0.66);
\filldraw[red] (.35,-0.3) circle (3pt) ;
\filldraw[blue] (-.05,0.3) circle (3pt);
\filldraw[black] (1,-.64) circle (.5pt);
\end{tikzcd}\\	
\end{align*}  
\noindent Observe that there are five terms of Soergel graphs on the right-hand side. One thing to note here is that $\cU_1 \otimes \cU_2 \otimes \cU_1 \otimes \cU_2$ is spanned by  $e_1 \otimes_\R (1|2) \otimes_{\Ba_n} e_2 \otimes_\C e_2 \otimes_{\Ba_n} (2|1) \otimes_\R (1|2) \otimes_{\Ba_n} e_2 \otimes_\C e_2  $ and $e_1 \otimes_\R (1|2) \otimes_{\Ba_n} e_2 \otimes_\C e_2 \otimes_{\Ba_n} (-ie_2)(2|1) \otimes_\R (1|2) \otimes_{\Ba_n} e_2 \otimes_\C e_2.$ 
 Without the coefficients,  looking at the second term in the above equation and applying appropriate,  we get
\begin{align*}
& e_1 \otimes (1|2) \otimes e_2 \otimes e_2 \otimes (2|1) \otimes (1|2) \otimes e_2 \otimes e_2  \\
& \ \ \xmapsto{\beta_1}  b_1  (1|2) \otimes e_2 \otimes (2|1) \otimes (1|2) \otimes e_2 \otimes e_2  \\
& \ \ \xmapsto{\beta_1} b_1^2  (1|2) \otimes X_2 \otimes e_2 \otimes e_2 \\
& \ \ \xmapsto{\delta_2} d_2 b_1^2  (1|2) \otimes e_2 \\
& \ \ \xmapsto{\alpha_2} a_2 d_2 b_1^2  (1|2) \otimes e_2 \otimes e_2 \otimes e_2 \\
& \ \ \xmapsto{\gamma_1} c_1 a_2 d_2 b_1^2  (1|2) \otimes e_2 \otimes (-ie_2)(2|1) \otimes (1|2)(ie_2) \otimes e_2 \otimes e_2  \\
& \ \  \ \ \ \ \ + c_1 a_2 d_2 b_1^2  (1|2) \otimes e_2 \otimes (2|1) \otimes (1|2) \otimes e_2 \otimes e_2.
\end{align*}
Similarly, 
\begin{align*}
& e_1 \otimes (1|2) \otimes e_2 \otimes e_2 \otimes (-ie_2)(2|1) \otimes (1|2) \otimes e_2 \otimes e_2  \\
& \ \ \mapsto c_1 a_2 d_2 b_1^2 \Big( (1|2) \otimes e_2 \otimes (2|1) \otimes (1|2)(-ie_2) \otimes e_2 \otimes e_2  \\
& \ \ \ \ \ + (1|2) \otimes e_2 \otimes (-ie_2)(2|1) \otimes (1|2) \otimes e_2 \otimes e_2 \Big).
\end{align*}
Once the calculations for all the five Soergel graphs has been done, comparing coefficients coming from four basis elements in the codomain will yield four defining equations.

Before giving you all the relations, we will deal with the unnecessary or overlapped relations. For \eqref{needle}, it says $b_j d_j a_j \beta_j \delta_j \alpha_j = 0$ which is true as $\delta_j \alpha_j = 0.$
 On the other hand, \eqref{Frobenius} (equiv. \eqref{assocmult} and \eqref{coassoccomult}) do not impose any restrictions on the coefficients whereas \eqref{wall} is replaced by \eqref{enddot counit comult} and \eqref{startdot unit multi}.
    In addition, the relations \eqref{assoc3}, \eqref{assoc4}, \eqref{Zamo4} \eqref{Zamo3}, \eqref{Zamo2}, \eqref{A3}, and \eqref{B3} are all trivially satisfied as the $2m_{st}$-valents vertices are killed for every $s,t \in S.$
    Finally, \eqref{mst2} has both side equal to zero as $\cU_j \otimes_{\Ba_n} \cU_k = 0$ in $\cB$ for $|j-k| > 1 $.

We now summarise all of the other required relations below (these are mostly the same as the type A case, with the exception of the two colour relations):

\eqref{barbell} \  $\implies$ \ \ $f^1_1 = 2b_1 c_1, f^1_2 =2 b_1 c_1, f^j_j = 2 b_j c_j, f^j_{j \pm 1} = b_j c_j, f^j_k=0$ for $j,k \geq 2$ and $|j-k| > 1,$

\eqref{polyforce} \ $\implies$  \  $f^1_1 = 2b_1 c_1, f^1_2 = - 2 b_2 c_2, f^j_j = 2 b_j c_j, f^{j \pm 1}_j = -b_j c_j, f^j_k=0$ for $j,k \geq 2$ and $|j-k| > 1,$

\eqref{mst3} (Type A 6-valences relation) \ $\implies$ \ $d_{j \pm 1} b_j c_j = -b_{j \pm 1}$ for suitable $j,$  

\eqref{mst4} (Type B 8-valences relation) \ $\implies$  \
Since $a_{s_2,s_1} a_{s_1,s_2}-1 = (-2)(-1) -1 =1,$ we will simplify the denominator first. 
For {\color{red}$s_1$ (red)} left-aligned, we get
$$b_1 - {a_{s_1,s_2}}b_1^2 d_2 a_2 c_1 - {a_{s_2,s_1}}b_2 d_1 c_2 + b_2 d_1 b_1 a_2 c_1 + b_1 d_2 c_2 = 0,$$
$$b_1 - a_{s_1,s_2} b_1^2 d_2 a_2 c_1 = 0, \ \
-a_{s_1,s_2} b_1^2 d_2 a_2 c_1 + b_2 d_1 b_1 a_2 c_1 = 0,  \ \
-a_{s_1,s_2} b_1^2 d_2 a_2 c_1 + b_1 d_2 c_2 = 0,$$
while for {\color{blue}$s_2$ (blue)} left-aligned, we get
$$b_2 - a_{s_2,s_1} b_2^2 d_1 a_1 c_2 - a_{s_1,s_2} b_1 d_2 c_1 + b_1 d_2 a_1 b_2 c_2 + b_2 d_1 c_1 = 0,$$
$$b_2 -  a_{s_1,s_2} b_1 d_2 c_1 = 0, \ \
 - a_{s_1,s_2} b_1 d_2 c_1 + b_1 d_2 a_1 b_2 c_2 = 0, \ \
-a_{s_1,s_2} b_1 d_2 c_1 + b_2 d_1 c_1 = 0, $$

\eqref{enddot counit comult} \ \  $\implies$ \ \ $a_jb_j = 1$  for all $1 \leq j \leq n,$

\eqref{startdot unit multi} \ \ $\implies$ \ \ $c_jd_j = 1$  for all $1 \leq j \leq n,$

\eqref{selfadjoint} \ \ $\implies$ \ \ $a_jb_jc_jd_j = 1$  for all $1 \leq j \leq n,$

\eqref{mult rot comult} \ \ $\implies$ \ \ $a_jb_jd_j = d_j$  for all $1 \leq j \leq n,$

\eqref{comult rot mult} \ \ $\implies$ \ \ $a_jc_jd_j = a_j$  for all $1 \leq j \leq n,$

\eqref{counit rot unit} \ \ $\implies$ \ \ $b_jc_jd_j = b_j$  for all $1 \leq j \leq n,$

\eqref{unit rot counit} \ \ $\implies$ \ \ $a_jb_jc_j = c_j$  for all $1 \leq j \leq n.$

The solution $a_j = b_j = (-1)^{j+1}, c_j = d_j = 1, f^1_2 = 2, f^j_j = (-1)^{j+1} 2, f^j_{j \pm 1} = (-1)^{j+1}$ and $f^j_k = 0$ for $|j-k|>1$ gives our desired functor.
\end{proof}

\begin{corollary} \label{factors}
The functor $G_0: \cD \to \cB$ in \cref{maintheorem} induces an exact monoidal functor $\ol{G}: \Kom^b(Kar(\ol{\cD})) \ra \Kom^b(Kar(\ol{\Ba}))$.
This functor $\ol{G}$ sends the generators of the 2-braid groups $F_{s_j}$ to $R_j[-1](1)$, matching Rouquier's complexes with our twist complexes (up to internal grading shift and cohomological shift).
\end{corollary}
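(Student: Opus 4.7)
The plan is to address the two assertions of the corollary separately. For the existence and monoidal structure of $\ol{G}$, we rely on standard categorical machinery. An additive monoidal functor between additive monoidal categories extends canonically along additive closures (by sending finite direct sums to finite direct sums), then along Karoubi envelopes via $(X, e) \mapsto (G_0(X), G_0(e))$, and finally to a functor on bounded homotopy categories by applying it termwise with the induced differentials. Exactness is automatic, since additive functors commute with cones and cohomological shifts; monoidality is inherited at each step of this construction.

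For the second assertion we compute $\ol{G}(F_{s_j})$ directly. Unfolding the definition $F_{s_j} = (0 \to B_{s_j} \to R(1) \to 0)$ with $B_{s_j}$ in cohomological degree $0$ and applying $\ol{G}$ term by term, we use that $G_0$ sends the length-one sequence $s_j$ to $\cU_j$ and the empty sequence to $\Ba_n$, while the terminal ``dot'' morphism is sent to $(-1)^{j+1}\beta_j$ according to \cref{fig: quotient functor on morphism} together with the scalar choice $b_j = (-1)^{j+1}$ fixed at the end of the proof of \cref{maintheorem}. This yields
\[
\ol{G}(F_{s_j}) \;=\; \big(\,0 \to \cU_j \xra{(-1)^{j+1}\beta_j} \Ba_n(1) \to 0\,\big),
\]
with $\cU_j$ in cohomological degree $0$.

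We then match this against $R_j[-1](1)$ by directly unfolding the two shifts. Starting from $R_j = (P_j^B \otimes_{\mathbb{K}_j} {}_jP^B \xra{\beta_j} \Ba_n)$ with $\Ba_n$ in cohomological degree $0$, the cohomological shift $[-1]$ moves $P_j^B \otimes_{\mathbb{K}_j} {}_jP^B$ into degree $0$ and $\Ba_n$ into degree $1$, while the internal shift $(1)$ converts $P_j^B \otimes_{\mathbb{K}_j} {}_jP^B$ into $\cU_j$ by the very definition $\cU_j := P_j^B \otimes_{\mathbb{K}_j} {}_jP^B(1)$. Hence $R_j[-1](1) = (\cU_j \xra{\beta_j} \Ba_n(1))$ with $\cU_j$ in cohomological degree $0$. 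The complexes $\ol{G}(F_{s_j})$ and $R_j[-1](1)$ therefore share the same terms in each cohomological degree, and their differentials differ only by the invertible scalar $(-1)^{j+1}$; they are isomorphic as complexes via the pair $(\mathrm{id}_{\cU_j}, (-1)^{j+1}\mathrm{id}_{\Ba_n(1)})$, and a fortiori in the homotopy category.

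The whole statement is essentially a bookkeeping consequence of \cref{maintheorem}, so no genuine obstacle is expected. The only subtlety is notational: one has to keep careful track of the interplay between the cohomological shift $[-1]$, the internal grading shift $(1)$, and the sign conventions for the generating morphisms of $\ol{G}$.
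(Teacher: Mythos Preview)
Your proof is correct; the paper states this corollary without proof, treating it as an immediate consequence of \cref{maintheorem}, and you have simply spelled out the standard categorical extensions and the termwise computation that the paper leaves implicit. One very minor caveat: depending on the sign convention for the differential under cohomological shift (many authors set $d_{X[-1]} = -d_X$), the differentials of $\ol{G}(F_{s_j})$ and $R_j[-1](1)$ may differ by $(-1)^j$ rather than $(-1)^{j+1}$, but either way it is an invertible scalar and your isomorphism via $(\mathrm{id}_{\cU_j}, \pm\mathrm{id}_{\Ba_n(1)})$ goes through unchanged.
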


\begin{corollary}[Faithfulness of type $B$ 2-braid group] 
The group homomorphism $\cA(B_n) \ra $ Pic$(2$-$\cB r)$ sending $\sigma_j \mapsto F_{s_j}$ is faithful.
\end{corollary}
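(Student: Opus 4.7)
The plan is to transport the problem through the monoidal quotient functor $\overline{G}$ of \cref{factors}, reducing the faithfulness of the type $B$ 2-braid group to the already-established faithfulness of the categorical Artin action in \cref{Cat B action}. Since $\sigma_j \mapsto F_{s_j}$ is a priori a group homomorphism, it suffices to show that its kernel is trivial.

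Let $w = \sigma_{i_1}^{\epsilon_1} \cdots \sigma_{i_k}^{\epsilon_k} \in \cA(B_n)$ satisfy $F_w \cong R$ in $\Kom^b(\cK ar(\overline{\cD}))$. Applying the exact monoidal functor $\overline{G}$ and using the identification $\overline{G}(F_{s_j}) \cong R_j[-1](1)$ from \cref{factors} (together with the analogous $\overline{G}(E_{s_j}) \cong R_j'[1](-1)$ forced by invertibility), the isomorphism translates into
\[
R_{i_1}^{\epsilon_1} \otimes_{\Ba_n} \cdots \otimes_{\Ba_n} R_{i_k}^{\epsilon_k} \,\cong\, \Ba_n[e(w)](-e(w))
\]
in $\Kom^b(\cK ar(\overline{\cB}))$, where $e(w) := \sum_j \epsilon_j$ is the exponent sum and $R_j^{+1} := R_j$, $R_j^{-1} := R_j'$. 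Tensoring this isomorphism of $(\Ba_n,\Ba_n)$-bimodule complexes on the right with an arbitrary $M \in \Kom^b(\Ba_n\text{-}\text{p}_r\text{g}_r\text{mod})$ and invoking the definition of the action in \cref{Cat B action} produces a natural isomorphism of endofunctors $\sigma_w^B(-) \cong (-)[e(w)](-e(w))$, so the Artin action of $w$ coincides with a pure cohomological/grading shift.

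The main obstacle is to upgrade this to $w = 1$. I anticipate handling it in two steps: first, rule out a non-trivial shift by testing against the indecomposable projective $P_1^B$ and using that $\sigma_w^B(P_1^B)$, whose cohomology can be read off from the explicit structure of the twist complexes $R_{i_j}^{\epsilon_j}$, is incompatible with being a pure cohomological/grading shift of $P_1^B$ unless $e(w) = 0$; this step ultimately draws on the categorified Burau framework of \cite{heng_nge} underpinning \cref{Cat B action}. Once the shift is forced to be trivial, the resulting natural isomorphism $\sigma_w^B \cong \id$ together with the faithfulness of the Artin action in \cref{Cat B action} immediately gives $w = 1 \in \cA(B_n)$, finishing the proof.
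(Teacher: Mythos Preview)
Your overall strategy—pushing the problem through $\overline{G}$ and invoking the faithfulness from \cref{Cat B action}—is exactly what the paper does, and the paper's proof is correspondingly much shorter: it simply observes that the assignment $\sigma_j \mapsto R_j[-1](1)$ is faithful (by \cref{Cat B action}) and that \cref{factors} says this factors through $\text{Pic}(2\text{-}\cB r)$, so the first map in the factorisation is injective. No element-chasing, no separate shift argument.

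Where your argument diverges is in the handling of the shift $[e(w)](-e(w))$, and this step contains a gap. You propose to rule out a non-trivial shift by testing on $P_1^B$, claiming that $\sigma_w^B(P_1^B)$ ``is incompatible with being a pure cohomological/grading shift of $P_1^B$ unless $e(w) = 0$''. But you have already derived $R_{i_1}^{\epsilon_1} \otimes_{\Ba_n} \cdots \otimes_{\Ba_n} R_{i_k}^{\epsilon_k} \cong \Ba_n[e(w)](-e(w))$, so tensoring with $P_1^B$ gives precisely $\sigma_w^B(P_1^B) \cong P_1^B[e(w)](-e(w))$—it \emph{is} a pure shift, and nothing about the structure of $P_1^B$ alone forces $e(w)=0$. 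Evaluating on a single object cannot detect whether a global autoequivalence that you already know to be a shift is the trivial shift.

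The cleaner fix—and what the paper is implicitly using with the phrase ``(gradings-shifted) action''—is that the faithfulness in \cite{heng_nge} is established via a shift-insensitive invariant (the induced action on isotopy classes of curves in a punctured disk). Hence if $R_w$ acts as \emph{any} shift, $w$ already acts trivially on curves and therefore $w=1$; the exponent-sum bookkeeping is then automatic. Once you accept this, the two-step shift argument is unnecessary and the proof collapses to the paper's one-line factorisation.
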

\begin{proof}
By \cref{Cat B action}, the assignment $\sigma_j \mapsto R_j[-1](1)$ is faithful as the (gradings-shifted) action it induces on $\Kom^b(\Ba_n$-$p_r g_r mod)$ is faithful.
\cref{factors} shows that this assignment factors through Pic$(2$-$\cB r)$, which implies that the group homomorphism $\cA(B_n) \ra $ Pic$(2$-$\cB r)$ is also faithful.
\end{proof}

\begin{remark}
Note that a similar proof strategy on the ``decategorified level'' will not work, as the Burau representation may not be faithful even in type $A_n$; for $n=5$ see \cite{Bige_Burau}.
To the best of our knowledge, the faithfulness of the Hecke algebra representation of Artin braid groups remains open for large ranks.
\end{remark}

\printbibliography

\end{document}